\newtheorem{theorem}{Theorem}[section]
\newtheorem{lemma}[theorem]{Lemma}
\newtheorem{proposition}[theorem]{Proposition}
\newtheorem{corollary}[theorem]{Corollary}
\theoremstyle{definition}
\newtheorem{definition}[theorem]{Definition}
\theoremstyle{remark}
\newtheorem{remark}[theorem]{Remark}
\newtheorem{example}[theorem]{Example}
\newtheorem{conjecture}{Conjecture}[section]
\numberwithin{equation}{section}
\begin{document} 

\title[Linking number of grid models]{Linking number of grid models}

\author[Senja Barthel]{Senja Barthel} 
\address[Barthel]{
Department of Mathematics, Vrije Universiteit Amsterdam, Center for Topology and Applications Amsterdam (CTA$^2$), De Boelelaan 1111,
1081 HV Amsterdam, Netherlands
}
\email{s.barthel@vu.nl}
\author[Yuka Kotorii]{Yuka Kotorii} 
\address[Kotorii]{
Mathematics Program, Graduate School of Advanced Science and Engineering, Hiroshima University
\\
1-7-1 Kagamiyama Higashi-hiroshima City, Hiroshima, Japan 739-8521}
\address[Kotorii]{International Institute for Sustainability with Knotted Chiral Meta Matter (WPI-SKCM$^2$), Hiroshima University, 1-3-1 Kagamiyama, Higashi-Hiroshima, Hiroshima 739-8526, Japan}
\address[Kotorii]{interdisciplinary Theoretical \& Mathematical Sciences Program (iTHEMS) RIKEN
\\
 2-1, Hirosawa, Wako, Saitama 351-0198, Japan}
\email{kotorii@hiroshima-u.jp}
\begin{abstract} 
This paper studies the linking numbers of random links within the grid model. The linking number is treated as a random variable on the isotopy classes of 2-component links, with the paper exploring its asymptotic growth as the diagram size increases. The main result is that the
$u$th moment of the linking number for a random link is a polynomial in the grid size with degree $d\leq u$, and all odd moments vanishing. The limits of the moments of the normalized linking number are computed, and it is shown that the distribution of the normalized linking number converges weakly as the grid size tends to infinity.
\end{abstract} 
\thanks{The author is partially supported by Grant-in-Aid for Young Scientists (B) (No. 16K17586), Grant-in-Aid for Early-Career Scientists (No. 20K14322) and Grant-in-Aid for Scientific Research (C) (No. 25K07005), Japan Society for the Promotion of Science. This work was in part supported by RIKEN iTHEMS Program and International Institute for Sustainability with Knotted Chiral Meta Matter (WPI-SKCM$^2$), Hiroshima University, 1-3-2 Kagamiyama Higashi-hiroshima City, Hiroshima 739-8521 Japan}
\subjclass[2020]{57M25, 57M27}
\keywords{random link, grid link diagram, linking number} 


\maketitle

\section{Introduction}

In this paper, we study linking numbers of random 2-component oriented links in the grid model, based on the representation of links as grid diagrams as introduced by Brunn \cite{B} and Cromwell \cite{C}.
The grid model has two benefits: First, it is universal, which means that every link can be represented by a grid diagram, and second, it allows for a simple combinatorial description of links in terms of two permutations.


We view the linking number as a random variable on the set of isotopy classes of 2-component links with grid diagrams of grid size $2n \times 2n$ and ask for its asymptotic growth with increasing diagram size as $n$ tends to infinity.
 
The $u$th moment of a random variable $X$ is the expected value $E[X^u]$. The moments of an invariant give a concrete indication of its value on a randomly sampled link. In order to understand the distribution of an invariant in the limit $n \rightarrow \infty$, the invariant must be normalized with respect to $n$. The following theorems determine the order of growth of the linking number.

\begin{theorem}\label{momentthm}
The $u$th moment of the linking number $lk$ for a random link $L_{n}$ in the grid model is a polynomial in $n$ of degree~$d\leq u$. Furthermore, all odd moments equal zero.
\end{theorem}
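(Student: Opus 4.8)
The plan is to convert the computation of $E[lk(L_n)^u]$ into a weighted count of small combinatorial patterns in the random grid diagram, reading off polynomiality and the degree bound from the fact that the relevant counts are ratios of factorials in the grid size. \emph{First, a combinatorial formula for $lk$.} Since every crossing of a grid diagram has the vertical strand passing over the horizontal one, the identity ``$lk$ equals the signed number of crossings at which the first component passes over the second'' becomes
\[
lk(L_n)\;=\;\sum_{(c,r)}\chi_{c,r},
\]
the sum running over those pairs of a column $c$ and a row $r$ for which the vertical segment in column $c$ and the horizontal segment in row $r$ lie on the two different components and cross; here $\chi_{c,r}=\pm 1$ is the crossing sign, and it factors as the orientation sign of the vertical segment in column $c$ times the orientation sign of the horizontal segment in row $r$. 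Each $\chi_{c,r}$ is an explicit function of the two permutations defining $L_n$, depending only on their values and preimages at $c$ and $r$. It is convenient to also group the sum by columns, $lk(L_n)=\sum_c W_c$ with $W_c=\sum_r\chi_{c,r}$ the signed number of opposite-component horizontal segments met by the vertical segment in column $c$.

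\emph{Moments as weighted counts.} Raising to the $u$th power and taking expectations gives
\[
E\big[lk(L_n)^u\big]\;=\;\sum_{(c_1,r_1),\dots,(c_u,r_u)}E\!\left[\prod_{k=1}^u\chi_{c_k,r_k}\right].
\]
I would group the $u$-tuples into finitely many \emph{types}, a type recording the pattern of coincidences, the relative order, and the component memberships of all row and column indices involved, including the auxiliary columns and rows that the defining permutations attach to each $c_k,r_k$. For a fixed type $T$ the expectation $E[\prod_k\chi_{c_k,r_k}]$ equals a single number $w_T(n)$: the only randomness entering is that of a uniform random permutation (conditioned on the two-component event) realizing a prescribed partial matching with prescribed order relations, so $w_T(n)$ is a ratio of products of falling factorials in $2n$. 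The number $N_T(n)$ of $u$-tuples of type $T$ is likewise a product of falling factorials, of degree equal to the number of free index parameters of $T$. Hence $E[lk(L_n)^u]=\sum_T N_T(n)\,w_T(n)$, and expanding this finite sum and using the explicit combinatorial description of the model to see that the denominators cancel, one obtains a polynomial in $n$; this is the polynomiality assertion.

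\emph{The degree bound --- the main obstacle.} Proving $d\le u$ is where I expect the real work to be, since one cannot bound $E[\prod_k W_{c_k}]$ termwise by a constant ($|W_c|$ can grow linearly in $n$). The estimate to aim for is
\[
\Big|E\!\Big[\textstyle\prod_{k=1}^u W_{c_k}\Big]\Big|\;=\;O\big(n^{\,u-b}\big),\qquad b:=\#\{c_1,\dots,c_u\},
\]
because there are $O(n^{b})$ tuples with $b$ distinct columns, so this yields $E[lk(L_n)^u]=O(n^u)$ and hence degree at most $u$. The two ingredients I would use are: (i) $E[W_c]=0$, which comes from the measure-preserving involution reversing the orientation of one component, under which every $W_c\mapsto-W_c$; and (ii) approximate independence, in the conditioned permutation model, of the data attached to well-separated columns, so that a column occurring only once among $c_1,\dots,c_u$ and decoupled from the rest contributes the vanishing factor $E[W_c]$ up to a relative error $O(1/n)$. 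Making (ii) quantitative and uniform --- controlling these corrections, handling the auxiliary indices carried by each row, and keeping track of the two-component conditioning, so that every ``unbalanced'' column genuinely costs a factor $n^{-1}$ --- is a cluster-expansion-type estimate which I expect to be the technical heart of the argument. The types attaining the full degree $u$ should be exactly those in which the $u$ crossings fall into $u/2$ pairs sharing a column, and computing their weights and multiplicities should give the leading coefficient, i.e. the $u$th moment of the limiting normalized linking number.

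\emph{Odd moments.} The vanishing of all odd moments is immediate from symmetry: reversing the orientation of one of the two components is a measure-preserving involution of the model that fixes the underlying unoriented link and multiplies the linking number by $-1$, so $lk(L_n)$ and $-lk(L_n)$ have the same distribution and $E[lk(L_n)^u]=(-1)^uE[lk(L_n)^u]$, forcing $E[lk(L_n)^u]=0$ for odd $u$. (Mirroring the whole grid is another such involution.) Since this only kills odd moments, it does not shortcut the degree bound for even $u$, which is why the quantitative estimate above remains the principal obstacle.
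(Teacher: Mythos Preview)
Your overall architecture---expand $lk^u$, group terms into finitely many combinatorial types, observe that the counts are products of falling factorials in $n$---matches the paper's, and your odd-moment argument via a sign-reversing involution is essentially theirs (they use the mirror $\sigma\mapsto\hat\sigma$, $\hat\sigma_i=2n+1-\sigma_i$).

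The gap is precisely the one you flag as the ``principal obstacle'': the degree bound. You propose to obtain it from $E[W_c]=0$ plus an \emph{approximate} decoupling of distinct columns, a cluster-expansion estimate you do not carry out and which would be delicate to make uniform. The paper bypasses this entirely with an \emph{exact} cancellation. It parametrizes crossings by edge indices $(k,l)$ rather than grid positions---the $k$th horizontal edge of component~$1$ against the $l$th vertical edge of component~$2$---so that the sign $\varepsilon_{k,l}$ depends on $\sigma$ only through $x_k,x_{k+1},x'_l$ and on $\pi$ only through $y_{k+1},y'_l,y'_{l+1}$. The ``type'' of $(k_1,\dots,k_u)$ then records only which $k_i$ are equal or adjacent $\bmod\ n$. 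The key lemma is: if some $k_i$ is \emph{isolated}---neither equal nor adjacent to any other $k_j$---then transposing the values $x_{k_i}$ and $x_{k_i+1}$ in $\sigma$ is a measure-preserving involution that flips the sign of the $i$th crossing and leaves every other $\varepsilon_{k_j,l_j}$ untouched. Pairing $\bm\varepsilon$ with the tuple obtained by negating $\varepsilon_i$ shows the $\bm\varepsilon$-sum vanishes identically for that type; the same holds with $\bm k$ and $\bm l$ interchanged. Hence only types in which every maximal block of equal-or-adjacent indices has size $\ge 2$ survive. There are then at most $u/2$ blocks on each side, the number of $\bm k$ (resp.\ $\bm l$) of a fixed surviving type is a polynomial in $n$ of degree equal to the number of blocks, and the remaining $\bm\varepsilon$-weighted count is independent of $n$. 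This gives degree $\le u$ directly, with equality realized by the ``all pairs'' types you anticipated---but with no asymptotic analysis needed. The idea you are missing is this local swap involution, which upgrades your approximate-independence heuristic to an exact identity and makes the whole degree bound a one-line consequence.
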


We also determine the leading terms of the polynomials $E[lk(L_{n})^u]$, which give the limits of the moments of the normalized invariant $lk/n$. 
For $u=2$, we obtain $\lim_{n \rightarrow \infty} E[(lk/n)^{u}] = 1/36$.

\begin{corollary}\label{distribution}
The distribution of the normalized linking number $\frac{lk(L_{n})}{n}$ converges weakly to the unique distribution with moments $\lim_{n \rightarrow \infty} E[(lk/n)^{u}]$ as $n \rightarrow \infty$. That is, the normalized linking number $\frac{lk(L_{n})}{n}$ converges weakly.
\end{corollary}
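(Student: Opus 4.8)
The plan is to prove Corollary~\ref{distribution} by the method of moments. Write $X_n := lk(L_n)/n$. By Theorem~\ref{momentthm}, for each $u\ge 0$ the quantity $E[lk(L_n)^u]$ is a polynomial in $n$ of degree $d\le u$, so
\[
m_u \;:=\; \lim_{n\to\infty} E[X_n^u] \;=\; \lim_{n\to\infty}\frac{E[lk(L_n)^u]}{n^u}
\]
exists and equals the coefficient of $n^u$ in that polynomial (in particular $m_u=0$ whenever $d<u$). Also $m_0=1$ since each $X_n$ is a bona fide random variable, and $m_u=0$ for odd $u$ because the odd moments of $lk(L_n)$ vanish. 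The sequence $(m_u)$ is a pointwise limit of moment sequences of probability measures, hence is itself positive semidefinite as a Hankel sequence, so by Hamburger's theorem it is the moment sequence of some probability measure. By the Fr\'echet--Shohat theorem, to conclude that $X_n$ converges weakly---necessarily to a measure with moments $(m_u)$---it then suffices to show that this moment sequence is \emph{determinate}, i.e.\ that its representing measure is unique.

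For determinacy I would verify Carleman's condition $\sum_{k\ge 1} m_{2k}^{-1/(2k)}=\infty$. It is enough to bound $m_{2k}\le C^{\,k}(2k)!$ for a constant $C$ independent of $k$: then $m_{2k}^{-1/(2k)}\ge c/k$ by Stirling's formula, and the series diverges. Such a bound should fall out of the combinatorial analysis underlying Theorem~\ref{momentthm}. Writing $lk(L_n)=\tfrac12\sum_c \varepsilon_c$ as a signed count over the crossings $c$ between the two components, one has $E[lk(L_n)^{2k}]=2^{-2k}\sum_{(c_1,\dots,c_{2k})}E[\varepsilon_{c_1}\cdots\varepsilon_{c_{2k}}]$, and $m_{2k}$ is the leading coefficient of this polynomial, i.e.\ the normalized count of the ``generic'' ordered $2k$-tuples of crossings that contribute at order $n^{2k}$. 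Such a tuple is determined by $O(1)^{k}$ worth of local combinatorial type together with an ordering of its $2k$ indices, giving at most $C^{k}(2k)!$ contributions per unit of $n^{2k}$; in fact the relevant pairing structure should yield the sharper estimate $m_{2k}=O\!\left(C^{k}(2k-1)!!\right)$, which also suffices. (Equivalently, any uniform-in-$n$ tail bound $\Pr(|X_n|\ge s)\le A e^{-\alpha s}$ would give the required moment growth.)

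With $(m_u)$ shown to be determinate, the Fr\'echet--Shohat theorem yields the corollary: $X_n=lk(L_n)/n$ converges weakly, as $n\to\infty$, to the unique probability distribution $\mu$ with $\int x^u\,d\mu(x)=m_u=\lim_{n\to\infty}E[(lk(L_n)/n)^u]$ for all $u$. The measure $\mu$ is symmetric because its odd moments vanish, and $\int x^2\,d\mu=1/36$ by the value recorded before the corollary. The one step that is not purely formal is the growth estimate on $m_{2k}$ needed for Carleman's condition, and I expect this to be the main obstacle; it should, however, follow readily once the explicit description of the moments from the proof of Theorem~\ref{momentthm} is in hand, with all remaining steps being the standard moment-method argument.
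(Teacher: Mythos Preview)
Your approach is essentially the same as the paper's: both use the method of moments, and the criterion you invoke (Carleman's condition via a bound $m_{2k}\le C^k(2k)!$) is equivalent to the paper's criterion $\limsup_u (\mu_{2u})^{1/2u}/(2u)<\infty$ (Theorem~\ref{limittheorem}). You correctly identify the moment growth bound as the only non-formal step; the paper fills exactly this gap using the explicit leading-coefficient formula of Remark~\ref{remark:coefficient}, bounding the number of admissible types $P$ (with all $|p_h|=2$) by $\tfrac{(2u)!}{2^u u!}\cdot 3^u$, taking $N_{P,Q,\bm{\varepsilon}}/(n_{P,Q}!)^2\le 1$, and summing over $\bm{\varepsilon}$ to get $\mu_{2u}\le (2u)!^2\,3^{2u}/u!^2$, which is $O(C^u(2u)!)$ as you anticipated. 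Your heuristic crossing-tuple count is along the right lines but not quite the paper's argument; the actual bound comes from counting types, not generic crossing configurations.
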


\begin{remark}
The normalized linking number $\frac{lk}{n}$ also converges weakly in the petaluma model as has been shown by Even, Hass, Linial and Nowik \cite{EHLN}. They conjectured that the normalized invariant $\frac{v_m}{n^m}$ of any finite-type knot invariant $v_m$ of order $m$ converges weakly to a limit distribution in the petaluma model as $n \rightarrow \infty$. This would generalize the result for the normalized linking number since $\frac{lk}{n}$ is a finite-type link invariant of order~$1$. 
\end{remark}

\if0
\begin{conjecture} 
We conjecture that for any given number $k$, the probability of randomly choosing links with linking number~$k$ converges to zero in the limit of infinitely large grids. In particular, this would imply that almost all randomly chosen links in an infinitely large grid are nontrivially linked.
\end{conjecture}
\begin{remark}
The link-homotopy class of a 2-component link is determined by the linking number. Therefore, the presented argument can also be applied to link-homotopy.
\end{remark}
\fi

In section 2, we recall the definitions of random knots and links as well as grid diagrams, and define 2-component random grid link diagrams.
In section 3, we recall the linking number of the grid diagram.
In section 4 and 5, we prove Theorem~\ref{momentthm} and Corollary~\ref{distribution} respectively.

\section*{Acknowledgements} 
The authors thank Professors Shin-ichi Ohta and Kenkichi Tsunoda for helpful discussions and their many comments and suggestions.
Y.K. is partially supported by the JSPS KAKENHI Grant-in-Aid for Early-Career Scientists Grant Number 20K14322 and ACT-X.
This work was in part supported by the RIKEN iTHEMS Program and World Premier International Research Center Initiative  program, International Institute for Sustainability with Knotted Chiral Meta Matter (WPI-SKCM$^2$).

\section{Random grid diagrams}

A {\it grid knot diagram of order $n$} is a polygonal knot diagram consisting of $2n$ vertices, $n$ vertical and $n$ horizontal edges in the $n \times n$ grid, where vertical edges always cross over horizontal edges, and each vertical and horizontal line contains exactly one edge of the knot diagram \cite{B,C}. 
Consequently, the knot diagram is determined by the grid coordinates of the vertices of the knot diagram. This information can be given as a pair of permutations $\sigma, \pi \in S_n$ that encodes the $(x, y)$~coordinates of the knot vertices in the grid and the order in which the knot passes through them.
Explicitly, the knot diagram is obtained by connecting the $2n$ vertices given by their coordinates as follows: $(\sigma(1), \pi(1)) \rightarrow (\sigma(1), \pi(2)) \rightarrow (\sigma(2), \pi(2)) \rightarrow (\sigma(2), \pi(3)) \rightarrow \dots \rightarrow (\sigma(n), \pi(1)) \rightarrow (\sigma(1), \pi(1)).$
Note that applying a cyclic permutation to both $\sigma$ and $\pi$ defines the same knot diagram.

In this paper, we study {\it 2-component oriented grid link diagrams $L_{n}$ of order $2n$} consisting of $n$ vertical and $n$ horizontal edges for each component. Such a diagram $L^{\sigma,\pi}_{n}$ can be defined similarly to a grid knot diagram by a pair of permutations $\sigma$, $\pi \in S_{2n}$:
The diagram of the first link component is described by the path $(\sigma(1), \pi(1)) \rightarrow (\sigma(1), \pi(2)) \rightarrow (\sigma(2), \pi(2)) \rightarrow (\sigma(2), \pi(3)) \rightarrow \dots \rightarrow (\sigma(n), \pi(n)) \rightarrow (\sigma(n), \pi(1)) \rightarrow (\sigma(1), \pi(1))$, and the diagram of the second link component is given by
$(\sigma(n+1), \pi(n+1)) \rightarrow (\sigma(n+1), \pi(n+2)) \rightarrow (\sigma(n+2), \pi(n+2)) \rightarrow (\sigma(n+2), \pi(n+3)) \rightarrow \dots \rightarrow (\sigma(2n), \pi(2n)) \rightarrow (\sigma(2n), \pi(n+1)) \rightarrow (\sigma(n+1), \pi(n+1))$. 
The orientation of each component is naturally induced by the order in which the vertices are traversed in the construction. 
For example, the 2-component grid link of order $4$ determined by $\sigma=(1,3,2,4)$ and $\pi=(2,4,1,3)$ is illustrated in Figure~\ref{gridgiagram}.
Its first component is given by $(1,2) \rightarrow (1,4) \rightarrow (3,4) \rightarrow (3,2) \rightarrow (1,2)$ and its second component by $(2,1) \rightarrow (2,3) \rightarrow (4,3) \rightarrow (4,1) \rightarrow (2,1)$.

Cromwell showed that every link (including knots) can be represented in a grid diagram \cite{C2}. 
Adams~et. al. \cite{A} noted that a petal diagram determined by a permutation $\pi \in S_{2n+1}$ can be turned into a grid diagram. The grid diagram $(\sigma, \pi)$ of order $2n+1$ has the same $\pi$ as the petal diagram, and $\sigma$ is defined by $\sigma (m)=nm \pmod{2n+1}$.
A similar grid model that also describes links by permutations but allows for varying number of components was introduced by Farhi~{\it et. al.} \cite{FGHLS}. 

\begin{figure}[h]
  \begin{center}
\includegraphics[width=.3\linewidth]{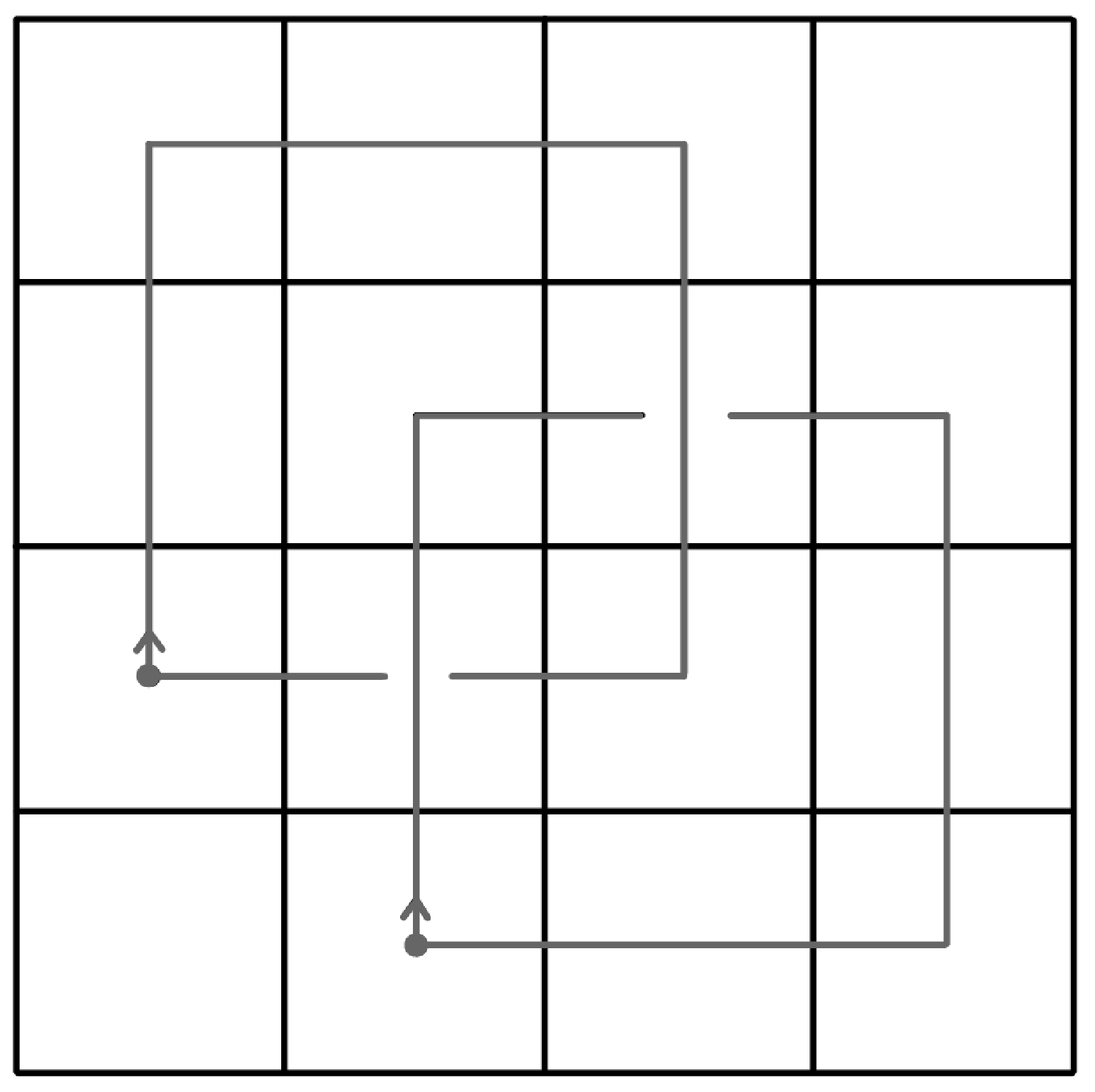}
\put(-95,-10){$1$} \put(-70,-10){$2$} \put(-43,-10){$3$} \put(-16,-10){$4$} 
\put(-120,11){$1$} \put(-120,37){$2$} \put(-120,65){$3$} \put(-120,89){$4$} 
   \caption{The 2-component oriented grid link diagram $L_{2}^{(1,3,2,4),(2,4,1,3)}$ of order 4. The link has linking number +1.}
    \label{gridgiagram}
  \end{center}
\end{figure}

A {\it random knot} in the grid model is obtained by choosing $\sigma$ and $\pi$ independently and uniformly at random~\cite{EHLN}.
Similarly, a {\it 2-component oriented random link $L_n$} in the grid model is defined by choosing $\sigma$ and $\pi$ independently and uniformly at random.

For 2-component random links in the the petaluma model, the limiting distribution of the linking number is determined by Even, Hass, Linial and Nowik \cite{EHLN}.

\section{Linking numbers of grid links} 
The linking number of an oriented 2-component link diagram is defined by the sum of the crossing signs such that the first link component crosses under the second.
The linking numbers of the 2-component grid link diagrams can therefore be computed as described in the following proposition.

\begin{proposition}
Let $\sigma=(x_1, \cdots, x_{n}, x'_1, \cdots, x'_{n})$ and $\pi=(y_1, \cdots, y_{n}, y'_1, \cdots, y'_{n})$ be permutations in $S_{2n}$. Then, the linking number of the 2-component link $L_{n}^{\sigma, \pi}$ of order $2n$ defined by $\sigma$ and $\pi$ is given as
$$lk(L_{n}^{\sigma, \pi})=\sum_{k=1}^{n} \sum_{l=1}^{n} \bm{\varepsilon}_{k,l}^{\sigma, \pi},$$
where $\bm{\varepsilon}_{k,l}^{\sigma, \pi}$ is the sign of the crossing between the horizontal edge from $(x_k, y_{k+1})$ to $(x_{k+1}, y_{k+1})$ and the vertical edge from $(x'_l, y'_{l})$ to $(x'_{l}, y'_{l+1})$ or 0 if the edges do not cross. 
This is, $\bm{\varepsilon}_{k,l}^{\sigma, \pi}$ is 1 if $\sigma$ and $\pi$ satisfy that $x_k < x'_l < x_{k+1}$ and $y'_{l+1} < y_{k+1}< y'_l$, or $x_{k+1} < x'_l < x_{k}$ and $y'_{l} < y_{k+1}< y'_{l+1}$. $\bm{\varepsilon}_{k,l}^{\sigma, \pi}$ is $-1$ if $x_k < x'_l < x_{k+1}$ and $y'_{l} < y_{k+1}< y'_{l+1}$, or $x_{k+1} < x'_l < x_{k}$ and $y'_{l+1} < y_{k+1}< y'_l$. Otherwise, $\bm{\varepsilon}_{k,l}^{\sigma, \pi}$ is 0. Here we consider indices $k$ and $l \mod n$.
\end{proposition}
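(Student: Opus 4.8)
The plan is to identify, directly from the permutation data, exactly which crossings of $L_n^{\sigma,\pi}$ contribute to the linking number, and then to compute their signs. I would begin from the stated definition: $lk$ is the signed count of those crossings at which the first component passes under the second. Two features of grid diagrams pin these crossings down. First, distinct horizontal (resp.\ vertical) edges lie on distinct horizontal (resp.\ vertical) grid lines, so the only crossings are between a horizontal edge and a vertical edge. Second, by the grid convention vertical edges always cross over horizontal edges, so at every crossing the under-strand is horizontal. Hence a crossing at which the first component lies under the second is necessarily formed by a horizontal edge of the first component passing beneath a vertical edge of the second; at a crossing of a vertical edge of the first component with a horizontal edge of the second it is the second component that runs underneath, so those crossings are discarded, as are all self-crossings of either component.

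Next I would set up the notation $x_i=\sigma(i)$, $y_i=\pi(i)$, $x'_i=\sigma(n+i)$, $y'_i=\pi(n+i)$ and read off from the defining path that, with indices taken modulo $n$, the horizontal edges of the first component are the segments from $(x_k,y_{k+1})$ to $(x_{k+1},y_{k+1})$ for $k=1,\dots,n$ (the term $k=n$ being the edge that closes up the component), and the vertical edges of the second component are the segments from $(x'_l,y'_l)$ to $(x'_l,y'_{l+1})$ for $l=1,\dots,n$. The $k$th horizontal edge sits at height $y_{k+1}$ and covers the $x$-range between $x_k$ and $x_{k+1}$; the $l$th vertical edge sits at abscissa $x'_l$ and covers the $y$-range between $y'_l$ and $y'_{l+1}$. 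Since all coordinates involved are distinct integers, these two edges meet in a transverse crossing precisely when $x'_l$ lies strictly between $x_k$ and $x_{k+1}$ and $y_{k+1}$ lies strictly between $y'_l$ and $y'_{l+1}$; splitting according to the two possible orders of $x_k,x_{k+1}$ and of $y'_l,y'_{l+1}$ already produces the four inequality patterns recorded in the statement, and $lk(L_n^{\sigma,\pi})=\sum_{k,l}\bm{\varepsilon}_{k,l}^{\sigma,\pi}$ once $\bm{\varepsilon}_{k,l}^{\sigma,\pi}$ is read as the crossing sign (and $0$ when there is no crossing).

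Finally I would compute the crossing sign in each of the four cases. The induced orientation of the first component makes the horizontal (under) strand point in direction $(+1,0)$ when $x_k<x_{k+1}$ and $(-1,0)$ when $x_{k+1}<x_k$; the orientation of the second component makes the vertical (over) strand point in direction $(0,+1)$ when $y'_l<y'_{l+1}$ and $(0,-1)$ when $y'_{l+1}<y'_l$. Since reversing the orientation of either strand at a crossing flips its sign, all four values follow once one is fixed, and applying the usual sign convention gives $\bm{\varepsilon}_{k,l}^{\sigma,\pi}=+1$ exactly when the (under, over) direction pair is $((+1,0),(0,-1))$ or $((-1,0),(0,+1))$ and $-1$ in the other two cases; translating these direction conditions back into inequalities on $x_k,x'_l,x_{k+1}$ and on $y'_l,y_{k+1},y'_{l+1}$ reproduces exactly the four patterns of the statement. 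I would sanity-check the signs against Figure~\ref{gridgiagram}, whose only contributing pair is $(k,l)=(2,1)$, with under-direction $(-1,0)$ and over-direction $(0,+1)$, hence $lk=+1$. I expect the only genuinely delicate step to be this sign bookkeeping, together with the careful use of the vertical-over-horizontal convention to discard the crossings at which the first component runs over the second; the remainder is a direct translation of the picture into inequalities.
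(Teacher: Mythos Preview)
Your argument is correct and complete. Note, however, that the paper does not actually supply a proof of this proposition: it merely records the standard definition of the linking number (sum of crossing signs at which the first component passes under the second) in the sentence preceding the statement, and then presents the proposition as a direct transcription of that definition into the grid-diagram coordinates, with Remark~\ref{remark_def_epsilon} immediately afterwards recasting the four inequality patterns as the conditions $A_{k,l}^{\pm 1}$, $B_{k,l}^{\pm 1}$.

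What you have written is therefore exactly the verification the paper leaves to the reader: the identification of the relevant crossings via the vertical-over-horizontal convention, the translation of ``edges meet'' into the betweenness inequalities, and the sign computation via the orientation of the two strands, cross-checked against the worked example of Figure~\ref{gridgiagram}. Your approach matches the paper's implicit reasoning; you simply make explicit the details it takes for granted.
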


\begin{remark}\label{remark_def_epsilon}
We call the condition $x_k < x'_l < x_{k+1}$ and $x_{k+1} < x'_l < x_{k}$ the condition $A_{k,l}$ and ${A}^{-1}_{k,l}$ respectively, and $y'_{l+1} < y_{k+1}< y'_l$ and $y'_{l} < y_{k+1}< y'_{l+1}$ the condition $B_{k,l}$ and ${B}^{-1}_{k,l}$ respectively. Obviously $A_{k,l}$ and ${A}^{-1}_{k,l}$,  ($B_{k,l}$ and ${B}^{-1}_{k,l}$, respectively) are mutually exclusive. \\
With this notation, we have $\bm{\varepsilon}_{k,l}^{\sigma, \pi}=
\begin{cases} +1 , & \text{for } A_{k,l} \wedge B_{k,l}\\
+1  , & \text{for } A_{k,l}^{-1} \wedge B_{k,l}^{-1}\\
-1 , & \text{for } A_{k,l} \wedge B_{k,l}^{-1}\\
-1 , & \text{for } A_{k,l}^{-1} \wedge B_{k,l}\\
\phantom{+}0, & \text{otherwise}
\end{cases}$.
\end{remark}

\section{Proof of Theorem \ref{momentthm}}
\begin{lemma}\label{odd_moments_vanish}The odd moments of the linking number vanish: $E[lk(L_n)^u]=0$ for $u$ odd. 
\end{lemma}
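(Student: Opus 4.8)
The plan is to exhibit a single sign-reversing involution on the sample space; this forces the law of $lk(L_n)$ to be symmetric about $0$, and symmetric distributions have vanishing odd moments. Concretely, I would keep $\pi$ fixed and replace $\sigma=(x_1,\dots,x_{2n})$ by $\bar\sigma=(2n+1-x_1,\dots,2n+1-x_{2n})$, i.e.\ $\bar\sigma=w_0\circ\sigma$ where $w_0\in S_{2n}$ is the order-reversing permutation. This is manifestly an involution of $S_{2n}$, and since $\sigma$ is chosen uniformly and independently of $\pi$, the pair $(\bar\sigma,\pi)$ has exactly the same distribution as $(\sigma,\pi)$.

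The next step is to check that this operation negates the linking number, $lk(L_n^{\bar\sigma,\pi})=-\,lk(L_n^{\sigma,\pi})$. Geometrically this is just the observation that reflecting a grid link diagram across a vertical line yields a valid grid link diagram (vertical edges stay vertical, horizontal edges stay horizontal, and the ``vertical over horizontal'' convention is preserved) representing the mirror link, and mirroring reverses the sign of every crossing and hence of the linking number. If one prefers a hands-on argument, it is immediate from the case table in Remark~\ref{remark_def_epsilon}: applying $t\mapsto 2n+1-t$ to all $x$-coordinates turns the chain $x_k<x'_l<x_{k+1}$ into $2n+1-x_{k+1}<2n+1-x'_l<2n+1-x_k$, so the new coordinates satisfy condition $A^{-1}_{k,l}$; thus $A_{k,l}\leftrightarrow A^{-1}_{k,l}$ are exchanged while $B_{k,l},B^{-1}_{k,l}$ are untouched (as $\pi$ is unchanged). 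Consequently each of the four nonzero cases for $\bm{\varepsilon}^{\sigma,\pi}_{k,l}$ flips sign, the zero case stays zero, and summing over $k,l$ gives $lk\mapsto -lk$.

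Finally, putting the two points together: $lk(L_n)$ and $-lk(L_n)$ are identically distributed, so for every $u$
\[
E\bigl[lk(L_n)^u\bigr]=E\bigl[(-lk(L_n))^u\bigr]=(-1)^u\,E\bigl[lk(L_n)^u\bigr],
\]
and therefore $E[lk(L_n)^u]=0$ whenever $u$ is odd.

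I do not expect any genuine obstacle here. The only points needing a moment's care are verifying that the reflected diagram is an admissible grid diagram in the sense of Section~2 (so that the formula of Section~3 applies to it verbatim) and that reflection reverses rather than preserves crossing signs; both are routine. One could equally well reflect in a horizontal line (acting on $\pi$) or reverse the orientation of a single component, and any of these alternatives yields the same conclusion.
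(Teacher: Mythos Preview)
Your proof is correct and is essentially the same as the paper's: the paper defines $\hat\sigma$ with $i$th entry $2n-\sigma_i+1$ (your $\bar\sigma=w_0\circ\sigma$), observes that $(\hat\sigma,\pi)$ gives the mirror image of the grid diagram for $(\sigma,\pi)$, and concludes that the linking number changes sign, so odd moments vanish. Your write-up is in fact more thorough, as you also verify the sign flip directly from the case table in Remark~\ref{remark_def_epsilon} and spell out the distributional identity $E[lk^u]=(-1)^uE[lk^u]$.
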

\begin{proof}
    For any $\sigma=(\sigma_1, \sigma_2, \cdots ,\sigma_{2n}) \in S_{2n}$, we define $\hat{\sigma} \in S_{2n}$ whose $i$th entry is $2n-\sigma_i +1$. By definition, $\hat{\sigma} \neq \sigma$.
For any $\sigma, \pi \in S_{2n}$, the grid link diagram obtained from $\sigma, \pi$ is the mirror image of the grid link diagram obtained from $\hat{\sigma}, \pi$ (by reflection over a vertical center line of the grid).

Because a crossing of the mirror image is opposite in sign to the corresponding crossing of the original one, these two linking numbers are opposite in sign.  
Therefore, the odd moments of the linking number vanish. 
\end{proof}


Therefore, from now on we only consider the case that the moment $u$ is even.

\begin{lemma}\label{lk1}
Let $\sigma, \pi \in S_{2n}$, and $u$ be even with $u<n$. The $u$th moment of the linking number of the link~$L_{n}^{\sigma, \pi}$ whose grid diagram is defined by $\sigma$ and $\pi$ is given by 
\[E_{\sigma, \pi}(lk(L_{n}^{\sigma, \pi})^{u})= \sum_{k_1, \cdots , k_{u}=1}^{n} \sum_{l_1, \cdots , l_{u}=1}^{n} \sum_{\bm{\varepsilon} \in \{\pm1\}^{u}} \left( \prod_{i=1}^{u} \varepsilon_i \right)   \frac{1}{(2n!)^2} \sharp \{ (\sigma, \pi) \in (S_{2n})^2 \mid \bm{\varepsilon_{k,l}}^{\sigma, \pi} = \bm{\varepsilon} \},\]
where $\bm{\varepsilon}=(\varepsilon_1, \cdots, \varepsilon_{u})$ and $ \bm{\varepsilon_{k,l}}^{\sigma, \pi}=(\varepsilon_{k_1,l_1}^{\sigma, \pi}, \cdots, \varepsilon_{k_{u},l_{u}}^{\sigma, \pi})$.
\end{lemma}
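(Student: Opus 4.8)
The plan is to obtain the formula by directly expanding the $u$th power of the linking number using the crossing-sign formula of the Proposition in Section~3 and then reorganizing the resulting finite sum. Since $E_{\sigma,\pi}$ denotes the average over the uniform choice of $(\sigma,\pi)\in (S_{2n})^2$, we start from
\[
E_{\sigma,\pi}\bigl(lk(L_{n}^{\sigma,\pi})^{u}\bigr)=\frac{1}{(2n!)^2}\sum_{(\sigma,\pi)\in (S_{2n})^2} lk(L_{n}^{\sigma,\pi})^{u}.
\]

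First I would substitute $lk(L_{n}^{\sigma,\pi})=\sum_{k=1}^{n}\sum_{l=1}^{n}\bm{\varepsilon}_{k,l}^{\sigma,\pi}$ from the Proposition and expand the $u$th power by distributivity (summing over ordered $u$-tuples of index pairs, so that no multinomial coefficients appear), obtaining
\[
lk(L_{n}^{\sigma,\pi})^{u}=\sum_{k_1,\cdots,k_{u}=1}^{n}\ \sum_{l_1,\cdots,l_{u}=1}^{n}\ \prod_{i=1}^{u}\bm{\varepsilon}_{k_i,l_i}^{\sigma,\pi},
\]
where the $u$-tuples $(k_1,\cdots,k_u)$ and $(l_1,\cdots,l_u)$ are exactly the index data appearing in the statement. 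Plugging this in and interchanging the finitely many sums so that the average over $(\sigma,\pi)$ becomes the innermost operation gives
\[
E_{\sigma,\pi}\bigl(lk(L_{n}^{\sigma,\pi})^{u}\bigr)=\sum_{k_1,\cdots,k_{u}=1}^{n}\ \sum_{l_1,\cdots,l_{u}=1}^{n}\ \frac{1}{(2n!)^2}\sum_{(\sigma,\pi)\in (S_{2n})^2}\ \prod_{i=1}^{u}\bm{\varepsilon}_{k_i,l_i}^{\sigma,\pi}.
\]

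Next, for each fixed choice of index tuples, I would partition $(S_{2n})^2$ according to the value of the vector $\bm{\varepsilon_{k,l}}^{\sigma,\pi}=(\bm{\varepsilon}_{k_1,l_1}^{\sigma,\pi},\cdots,\bm{\varepsilon}_{k_{u},l_{u}}^{\sigma,\pi})\in\{-1,0,1\}^{u}$, which is well defined by the Proposition and Remark~\ref{remark_def_epsilon}. On the block of pairs with $\bm{\varepsilon_{k,l}}^{\sigma,\pi}=\bm{\varepsilon}$ the product $\prod_{i=1}^{u}\bm{\varepsilon}_{k_i,l_i}^{\sigma,\pi}$ is constant and equals $\prod_{i=1}^{u}\varepsilon_i$, so
\[
\sum_{(\sigma,\pi)\in (S_{2n})^2}\ \prod_{i=1}^{u}\bm{\varepsilon}_{k_i,l_i}^{\sigma,\pi}=\sum_{\bm{\varepsilon}\in\{-1,0,1\}^{u}}\Bigl(\prod_{i=1}^{u}\varepsilon_i\Bigr)\,\sharp\{(\sigma,\pi)\in (S_{2n})^2\mid \bm{\varepsilon_{k,l}}^{\sigma,\pi}=\bm{\varepsilon}\}.
\]
Every summand in which some coordinate $\varepsilon_i$ equals $0$ contributes $\prod_{i=1}^{u}\varepsilon_i=0$ and hence drops out, so the index set may be restricted to $\bm{\varepsilon}\in\{\pm1\}^{u}$. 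Substituting back into the previous display yields exactly the asserted identity.

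I do not expect a serious obstacle here: the argument is a bookkeeping expansion, and the only points needing care are (i) keeping the two roles of the symbols $\sigma,\pi$ apart --- the fixed link $L_{n}^{\sigma,\pi}$ versus the summation variables in the counting set --- and (ii) remembering that each $\bm{\varepsilon}_{k,l}^{\sigma,\pi}$ may vanish, which is precisely what forces the reduction from $\{-1,0,1\}^{u}$ to $\{\pm1\}^{u}$ in the final formula. The hypothesis $u<n$ is not needed for this identity itself; it is carried along because it will be used when analyzing the cardinalities $\sharp\{(\sigma,\pi)\mid \bm{\varepsilon_{k,l}}^{\sigma,\pi}=\bm{\varepsilon}\}$ in the subsequent steps of the proof of Theorem~\ref{momentthm}.
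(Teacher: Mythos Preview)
Your proof is correct and follows essentially the same route as the paper: expand the $u$th power of the crossing-sign sum, exchange the finite summations, and replace the product $\prod_i \bm{\varepsilon}_{k_i,l_i}^{\sigma,\pi}$ by an indicator-style partition over the possible sign vectors, after which the zero-coordinate cases drop out. Your explicit remark that one first sums over $\{-1,0,1\}^{u}$ and then restricts to $\{\pm1\}^{u}$ makes transparent a step the paper leaves implicit.
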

\begin{proof}
    The product of $u$ linking numbers of $L_{n}^{\sigma, \pi}$ is computed as
    \begin{align*}
lk(L_{n}^{\sigma, \pi}) ^{u}
&= (\sum_{k_1=1}^{n} \sum_{l_1=1}^{n} \varepsilon_{k_1,l_1}^{\sigma, \pi}) \cdots (\sum_{k_{u}=1}^{n} \sum_{l_{u}=1}^{n} \varepsilon_{k_{u},l_{u}}^{\sigma, \pi}) \\
&= \sum_{k_1, \cdots , k_{u}=1}^{n} \sum_{l_1, \cdots , l_{u}=1}^{n}  \varepsilon_{k_1,l_1}^{\sigma, \pi} \cdots \varepsilon_{k_{u},l_{u}}^{\sigma, \pi}  \\ 
&= \sum_{k_1, \cdots , k_{u}=1}^{n} \sum_{l_1,  \cdots , l_{u}=1}^{n} \sum_{\bm{\varepsilon} \in \{\pm1\}^{u}} \left( \prod_{i=1}^{u} \varepsilon_i \right) \mathcal{I}[\bm{\varepsilon_{k,l}}^{\sigma, \pi} = \bm{\varepsilon} ], 
\end{align*}
where $\mathcal{I}[-] = 1$ if the condition $\bm{\varepsilon_{k,l}}^{\sigma, \pi} = \bm{\varepsilon}$ is true and $\mathcal{I}[-] = 0$ otherwise. Therefore, the moments can be expressed as    
\begin{align*}
& E_{\sigma, \pi}(lk(L_{n}^{\sigma, \pi})^{u}) \nonumber \\
&= \frac{1}{(2n!)^2} \sum_{\sigma, \pi \in S_{2n}} \left( \sum_{k_1, \cdots , k_{u}=1}^{n} \sum_{l_1, \cdots , l_{u}=1}^{n} \sum_{\bm{\varepsilon} \in \{\pm1\}^{u}}  \left( \prod_{i=1}^{u} \varepsilon_i \right) \mathcal{I}[\bm{\varepsilon_{k,l}}^{\sigma, \pi} = \bm{\varepsilon}] \right) \nonumber \\
&= \sum_{k_1, \cdots , k_{u}=1}^{n} \sum_{l_1, \cdots , l_{u}=1}^{n} \sum_{\bm{\varepsilon} \in \{\pm1\}^{u}}  \left( \prod_{i=1}^{u} \varepsilon_i \right) \frac{1}{(2n!)^2} \sum_{\sigma, \pi \in S_{2n}}\mathcal{I}[\bm{\varepsilon_{k,l}}^{\sigma, \pi} = \bm{\varepsilon}]  \nonumber \\
&= \sum_{k_1, \cdots , k_{u}=1}^{n} \sum_{l_1, \cdots , l_{u}=1}^{n} \sum_{\bm{\varepsilon} \in \{\pm1\}^{u}} \left( \prod_{i=1}^{u} \varepsilon_i \right)   \frac{1}{(2n!)^2} \sharp \{ (\sigma, \pi) \in (S_{2n})^2 \mid \bm{\varepsilon_{k,l}}^{\sigma, \pi} = \bm{\varepsilon} \}.  
\end{align*}  
\end{proof}

The following lemma unpacks the condition of $\bm{\varepsilon_{k,l}}^{\sigma, \pi} = \bm{\varepsilon}$ into an equivalent condition formulated in terms of the crossing behavior $A, A^{-1}, B, B^{-1}$ of the oriented linear segments of the link diagram $L_{n}^{\sigma, \pi}$.

\begin{lemma}\label{condition} 
For any $\bm{\varepsilon}=(\varepsilon_1, \cdots, \varepsilon_{u}) \in \{\pm 1\}^u$, the following three logical conditions are equivalent.\\

\begin{itemize}
    \item[(1)] $\displaystyle \bm{\varepsilon_{k,l}}^{\sigma, \pi} = \bm{\varepsilon}, \text{ i.e, } \bigwedge_{i=1}^{u} \left( {\varepsilon}_{k_i,l_i}^{\sigma, \pi} = \varepsilon_i  \right) $
    \item[(2)] $\displaystyle \bigwedge_{i =1}^{u}  (A_{k_i,l_i} \wedge B_{k_i,l_i}^{\varepsilon_i}) \vee (A_{k_i,l_i}^{-1} \wedge B_{k_i,l_i}^{- \varepsilon_i})$
\item[(3)] $\displaystyle \bigvee_{\bm{\eta} \in \{ \pm1 \}^u} \left( \bigwedge_{i=1}^{u} A_{k_i,l_i}^{\eta_i} \wedge   B_{k_i,l_i}^{\eta_i \varepsilon_i} \right)$, 
where $\bm{\eta}=(\eta_1, \cdots, \eta_{u})$.
\end{itemize}
\end{lemma}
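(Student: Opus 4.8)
The plan is to establish the two equivalences $(1)\Leftrightarrow(2)$ and $(2)\Leftrightarrow(3)$ separately, both by purely propositional manipulations, starting from the case analysis recorded in Remark~\ref{remark_def_epsilon}.

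For $(1)\Leftrightarrow(2)$ I would argue componentwise. Fix an index $i$ and abbreviate $A=A_{k_i,l_i}$, $B=B_{k_i,l_i}$. By Remark~\ref{remark_def_epsilon}, $\varepsilon_{k_i,l_i}^{\sigma,\pi}=+1$ holds exactly when $(A\wedge B)\vee(A^{-1}\wedge B^{-1})$ does, and $\varepsilon_{k_i,l_i}^{\sigma,\pi}=-1$ holds exactly when $(A\wedge B^{-1})\vee(A^{-1}\wedge B)$ does; in both cases this is precisely $(A\wedge B^{\varepsilon_i})\vee(A^{-1}\wedge B^{-\varepsilon_i})$ after substituting $\varepsilon_i=+1$ or $\varepsilon_i=-1$. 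Since we only ask for $\varepsilon_{k_i,l_i}^{\sigma,\pi}=\varepsilon_i$ with $\varepsilon_i\in\{\pm1\}$, the ``$=0$'' case is automatically excluded and plays no role. Taking the conjunction over $i=1,\dots,u$ turns this pointwise identity into the equivalence of $(1)$ and $(2)$.

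For $(2)\Leftrightarrow(3)$ I would first rewrite each disjunct of the $i$-th clause of $(2)$ in the uniform shape $A_{k_i,l_i}^{\eta_i}\wedge B_{k_i,l_i}^{\eta_i\varepsilon_i}$: taking $\eta_i=+1$ gives $A_{k_i,l_i}\wedge B_{k_i,l_i}^{\varepsilon_i}$, while taking $\eta_i=-1$ gives $A_{k_i,l_i}^{-1}\wedge B_{k_i,l_i}^{-\varepsilon_i}$, using that $B^{(-1)\varepsilon_i}=B^{-\varepsilon_i}$. Hence the $i$-th clause of $(2)$ is exactly $\bigvee_{\eta_i\in\{\pm1\}}\bigl(A_{k_i,l_i}^{\eta_i}\wedge B_{k_i,l_i}^{\eta_i\varepsilon_i}\bigr)$, so $(2)$ becomes $\bigwedge_{i=1}^{u}\bigvee_{\eta_i\in\{\pm1\}}\bigl(A_{k_i,l_i}^{\eta_i}\wedge B_{k_i,l_i}^{\eta_i\varepsilon_i}\bigr)$. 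Applying the distributive law of conjunction over disjunction, which converts $\bigwedge_i\bigvee_{\eta_i}$ into a single disjunction over all choice vectors $\bm{\eta}=(\eta_1,\dots,\eta_u)\in\{\pm1\}^u$, yields exactly statement $(3)$.

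I do not expect a substantive obstacle here: the argument is elementary Boolean algebra. The only points requiring care are the sign bookkeeping in the exponents of $B$ (checking that $-\varepsilon_i$ and $\eta_i\varepsilon_i$ really agree for $\eta_i=\pm1$) and recording that the mutual exclusivity of $A_{k_i,l_i}$ with $A_{k_i,l_i}^{-1}$, and of $B_{k_i,l_i}$ with $B_{k_i,l_i}^{-1}$, already noted in Remark~\ref{remark_def_epsilon}, makes the disjuncts in $(2)$ and $(3)$ genuinely exclusive, so that no double counting arises when this lemma is subsequently used to enumerate pairs $(\sigma,\pi)$.
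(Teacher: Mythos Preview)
Your proposal is correct and follows essentially the same approach as the paper: both obtain $(1)\Leftrightarrow(2)$ directly from Remark~\ref{remark_def_epsilon} componentwise, and both obtain $(2)\Leftrightarrow(3)$ by distributing the outer conjunction over the inner two-term disjunctions. The only cosmetic difference is that the paper carries out this distribution by an explicit induction on $u$, whereas you invoke the general distributive law $\bigwedge_i\bigvee_{\eta_i}C_{i,\eta_i}=\bigvee_{\bm\eta}\bigwedge_i C_{i,\eta_i}$ in one step; the content is identical.
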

Note that the roles of $A_{k_i,l_i}$ and $B_{k_i,l_i}$ in the above lemma can be exchanged. That is, condition~(2) is true if and only if $\bigwedge_{i =1}^{u}  (A_{k_i,l_i}^{\varepsilon_i} \wedge B_{k_i,l_i}) \vee (A_{k_i,l_i}^{- \varepsilon_i} \wedge B_{k_i,l_i}^{-1})  $ is true, and condition~(3) is true if and only if $\bigvee_{\bm{\eta} \in \{ \pm1 \}^u}  \left( \bigwedge_{i=1}^{u} A_{k_i,l_i}^{\eta_i\varepsilon_i} \wedge   B_{k_i,l_i}^{\eta_i} \right)$ is true.
\begin{proof}
It follows from Remark~\ref{remark_def_epsilon} that condition~(1) and condition~(2) are equivalent.
We show the equivalence of condition~(2) and condition~(3) by induction on $u$. For better readability, we denote $A_{k_i,l_i}$ and $B_{k_i,l_i}$ by $A_i$ and $B_i$, respectively. \\
The base case $u=1$ holds since condition~(2) is obviously equivalent to condition~(3). 
Suppose that the statement of (2) being equivalent to (3) holds for $u-1$, that is,
$$\bigwedge_{i =1}^{u-1}  (A_i \wedge B_i^{\varepsilon_i}) \vee (A_i^{-1} \wedge B_i^{- \varepsilon_i}) 
= \bigvee_{\bm{\eta} \in \{ \pm1 \}^{u-1}}   \left( \bigwedge_{i=1}^{u-1} A_i^{\eta_i} \wedge   B_i^{\eta_i \varepsilon_i} \right), \bm{\eta}=(\eta_1, \cdots, \eta_{u-1}).$$
Then we have
\begin{align*}
& \bigwedge_{i =1}^{u}  (A_i \wedge B_i^{\varepsilon_i}) \vee (A_i^{-1} \wedge B_i^{- \varepsilon_i}) \\
 &= \left( \bigwedge_{i =1}^{u-1}  (A_i \wedge B_i^{\varepsilon_i}) \vee (A_i^{-1} \wedge B_i^{- \varepsilon_i}) \right) \wedge \left( (A_u \wedge B_u^{\varepsilon_u}) \vee (A_u^{-1} \wedge B_u^{- \varepsilon_u}) \right) \\
&= \left( \bigvee_{\bm{\eta} \in \{ \pm1 \}^{u-1}}   \left( \bigwedge_{i=1}^{u-1} A_i^{\eta_i} \wedge   B_i^{\eta_i \varepsilon_i} \right) \right) \wedge \left( (A_u \wedge B_u^{\varepsilon_u}) \vee (A_u^{-1} \wedge B_u^{- \varepsilon_u}) \right) \\
&=   \bigvee_{\bm{\eta} \in \{ \pm1 \}^{u-1}} \left( \bigwedge_{i=1}^{u-1} \left( A_i^{\eta_i} \wedge   B_i^{\eta_i \varepsilon_i} \right) \wedge A_u \wedge B_u^{\varepsilon_u} \right) \vee   \left( \bigwedge_{i=1}^{u-1} \left( A_i^{\eta_i} \wedge B_i^{\eta_i \varepsilon_i} \right) \wedge A_u^{-1} \wedge B_u^{- \varepsilon_u}  \right) \\
&= \bigvee_{\bm{\eta} \in \{ \pm1 \}^u}  \bigwedge_{i=1}^{u} \left( A_i^{\eta_i} \wedge  B_i^{\eta_i \varepsilon_i}  \right).
\end{align*}
Therefore, we obtain the equivalence of (2) and (3).  
\end{proof}

Using the previous result, it is possible to count the number of grid links with given $\bm{\varepsilon_{k,l}}^{\sigma, \pi}$ by considering the orientations of the horizontal edges defined by $\sigma$ and the orientations of the vertical edges defined by $\pi$ independently, as shown in the next lemma. 

\begin{lemma}\label{type} 
Let $\bm{k}=(k_1, \cdots , k_{u})$ and $\bm{l}=(l_1, \cdots , l_{u})$ be sequences on $\{1, \cdots , n \}$ and $\bm{\varepsilon}=(\varepsilon_1, \cdots, \varepsilon_u)$ be a sequence on $\{ \pm1 \}$. 
Then, it holds that $$\sharp \{ (\sigma, \pi) \in (S_{2n})^2 \mid \bm{\varepsilon_{k,l}}^{\sigma, \pi} = \bm{\varepsilon} \} = \sum_{\bm{\eta} \in \{ \pm1 \}^u}  \sharp \{ \sigma \in S_{2n} \mid \bigwedge_{i=1}^{u} A_{k_i,l_i}^{\eta_i} \} 
 \sharp \{ \pi \in S_{2n} \mid \bigwedge_{i=1}^{u} B_{k_i,l_i}^{\eta_i \varepsilon_i} \}. $$ 
\end{lemma}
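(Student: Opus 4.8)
The plan is to reduce condition~(1) to the disjunctive form~(3) of Lemma~\ref{condition} and then exploit the fact that in each disjunct the ``$A$-part'' constrains only $\sigma$ while the ``$B$-part'' constrains only $\pi$. By Lemma~\ref{condition}, the set $\{(\sigma,\pi)\in(S_{2n})^2 \mid \bm{\varepsilon_{k,l}}^{\sigma,\pi}=\bm{\varepsilon}\}$ equals the set of pairs $(\sigma,\pi)$ satisfying $\bigvee_{\bm{\eta}\in\{\pm1\}^u}\bigwedge_{i=1}^u\bigl(A_{k_i,l_i}^{\eta_i}\wedge B_{k_i,l_i}^{\eta_i\varepsilon_i}\bigr)$, so it suffices to count this union of events.

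First I would show that the $2^u$ events $E_{\bm{\eta}} := \{(\sigma,\pi)\in(S_{2n})^2 \mid \bigwedge_{i=1}^u (A_{k_i,l_i}^{\eta_i}\wedge B_{k_i,l_i}^{\eta_i\varepsilon_i})\}$, indexed by $\bm{\eta}\in\{\pm1\}^u$, are pairwise disjoint. If $\bm{\eta}\neq\bm{\eta}'$ then $\eta_i\neq\eta_i'$ for some $i$, and since $A_{k_i,l_i}$ and $A_{k_i,l_i}^{-1}$ are mutually exclusive by Remark~\ref{remark_def_epsilon}, no pair $(\sigma,\pi)$ can lie in both $E_{\bm{\eta}}$ and $E_{\bm{\eta}'}$. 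Hence the cardinality of the union is exactly $\sum_{\bm{\eta}\in\{\pm1\}^u}\sharp E_{\bm{\eta}}$.

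Next, for a fixed $\bm{\eta}$ I would observe that $A_{k_i,l_i}^{\eta_i}$ is a statement purely about the entries $x_{k_i}, x_{k_i+1}, x'_{l_i}$ of $\sigma$ (indices read $\bmod\ n$, so these are always entries of $\sigma$), and hence about $\sigma$ alone, whereas $B_{k_i,l_i}^{\eta_i\varepsilon_i}$ is a statement purely about the entries $y_{k_i+1}, y'_{l_i}, y'_{l_i+1}$ of $\pi$, and hence about $\pi$ alone. Since $\sigma$ and $\pi$ range independently over $S_{2n}$, the event $E_{\bm{\eta}}$ is a product event, giving $\sharp E_{\bm{\eta}} = \sharp\{\sigma\in S_{2n}\mid \bigwedge_{i=1}^u A_{k_i,l_i}^{\eta_i}\}\cdot\sharp\{\pi\in S_{2n}\mid\bigwedge_{i=1}^u B_{k_i,l_i}^{\eta_i\varepsilon_i}\}$. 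Summing over $\bm{\eta}$ yields the claimed identity.

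The only point needing genuine care is the disjointness step: without it one would be forced to apply inclusion--exclusion over the $\bm{\eta}$, which would not produce the clean product formula. Everything else --- the factorization of each $E_{\bm{\eta}}$ and the bookkeeping of which variables appear in $A$ versus $B$ --- is a direct unwinding of the definitions in the Proposition and Remark~\ref{remark_def_epsilon}, and I expect no real obstacle there.
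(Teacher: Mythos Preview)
Your proof is correct and follows essentially the same route as the paper's: invoke Lemma~\ref{condition} to pass to the disjunctive form, use the mutual exclusivity of $A_{k_i,l_i}$ and $A_{k_i,l_i}^{-1}$ to get disjointness of the $E_{\bm\eta}$, and then factor each $E_{\bm\eta}$ using the fact that the $A$-conditions depend only on $\sigma$ and the $B$-conditions only on $\pi$. The paper's argument is the same in substance, only slightly more terse.
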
 

\begin{proof} For better readability, we denote $A_{k_i,l_i}$ and $B_{k_i,l_i}$ by $A_i$ and $B_i$, respectively. Since
$\bigwedge_{i=1}^{u} \left( A_i^{\eta_i} \wedge  B_i^{\eta_i \varepsilon_i}  \right) = (\bigwedge_{i=1}^{u} A_{i}^{\eta_i}) \wedge (\bigwedge_{i=1}^{u}  B_{i}^{\eta_i \varepsilon_i})$, Lemma~\ref{condition} implies 
\[\sharp \{ (\sigma, \pi) \in (S_{2n})^2 \mid \bm{\varepsilon_{k,l}}^{\sigma, \pi}=\bm{\varepsilon} \} = \sharp \{ (\sigma, \pi) \in (S_{2n})^2 \mid \bigvee_{\bm{\eta} \in \{ \pm1 \}^u} ( \bigwedge_{i=1}^{u} A_{i}^{\eta_i}) \wedge (\bigwedge_{i=1}^{u}  B_{i}^{\eta_i \varepsilon_i}) \}. (\ast) \]
Because $A_{i}$ and $A_{i}^{-1}$ are mutually exclusive, the conditions $( \bigwedge_{i=1}^{u} A_{i}^{\eta_i}) \wedge (\bigwedge_{i=1}^{u}  B_{i}^{\eta_i \varepsilon_i})$ are mutually exclusive for different $(\eta_1, \cdots ,\eta_u) \in \{\pm1 \}^u$ . Therefore, the above equation can be continued as  
\[(\ast)=\sum_{\bm{\eta} \in \{ \pm1 \}^u}  \sharp \{ (\sigma, \pi) \in (S_{2n})^2 \mid ( \bigwedge_{i=1}^{u} A_{i}^{\eta_i}) \wedge (\bigwedge_{i=1}^{u}  B_{i}^{\eta_i \varepsilon_i}) \},\]
and since $A_{i}$ and $B_{i}$ are independent, this equals
\[  \sum_{\bm{\eta} \in \{ \pm1 \}^u}  \sharp \{ \sigma \in S_{2n} \mid \bigwedge_{i=1}^{u} A_{i}^{\eta_i} \} 
 \sharp \{ \pi \in S_{2n} \mid \bigwedge_{i=1}^{u} B_{i}^{\eta_i \varepsilon_i} \}. \] 
\end{proof}
To calculate  $\sharp \{ \sigma \in S_{2n} \mid \bigwedge_{i=1}^{u} A_{i}^{\eta_i} \}$ and $\sharp \{ \pi \in S_{2n} \mid \bigwedge_{i=1}^{u} B_{i}^{\eta_i \varepsilon_i} \}$ for given $\eta$ and $\varepsilon$, we define the {\it type} of a length $u$ sequence on $\{1, \cdots ,n\}$ as follows.

\begin{definition}
Let $u < n$. For a given sequence $(k_1, \cdots, k_u)$ of length $u$ with $k_1, \cdots, k_u \in \{1, \cdots ,n\}$, we define its type $P$ as the set of sequences of the non-empty subsets whose entries are $\{1, \cdots ,u\}$ by the following rules: \\
(0) Each $i$ ($1 \leq i \leq u$) appears in exactly once in $P$.\\
(1) Indices $i$ and $j$ are in the same set in a sequence in $P$, if $k_i=k_j$. \\
(2) In a sequence in $P$, the next set of the set including $i$ includes $j$, if $k_i+1 \equiv k_j \mod{n}$. \\
(3) Indices $i$ and $j$ are in different sequences, if $k_i-k_j \not\equiv 0, \pm 1 \mod{n}$. \\
We denote the set of all types sequences of length $u$ on $\{1, \cdots ,n\}$ by $T(u, n)$. 
\end{definition}

\begin{example}
The sequence $(2,5,5,1)$ on $\{1, \cdots ,5\}$ is of type $\{ (\{2,3\}, \{4\}, \{1\}) \}$. 
The sequence $(2,4,4,1)$ on $\{1, \cdots ,5\}$ is of type $\{ (\{2,3\}), (\{4\}, \{1\}) \}$. 
The sequence $(4,2,1,4)$ on $\{1, \cdots ,5\}$ is of type $\{ (\{1,4\}), (\{3\}, \{2\}) \}$.
\end{example}

\begin{remark}\label{structure} 
Let $u < n$.  Suppose a sequence $(k_1, \cdots, k_u)$ on $\{1, \cdots ,n\}$ is of type  $P=\{p_1,p_2, \cdots ,p_s \}$, where each $p_h=(P^h_m)_{m=1, \cdots t_h}$ $(1 \leq h \leq s)$ is a sequence and $P^h_m=\{p^h_{m,o} \mid 1 \leq o \leq v_{h,m}  \}$ is a subset of $\{1, \cdots ,u\}$. Here, $s, t_h, v_{h,m}$ are some positive integers.
That is,
\begin{align*}
P=\{p_1, \cdots, &p_h, \cdots ,p_s \} \\
&p_h =(P_1^h, \cdots , P_m^h, \cdots , P^h_{t_h}) \\
&\phantom{p_h =(P_1^h, \cdots ,} P_m^h=\{p_{m,1}^h, \cdots, p^h_{m,o}, \cdots, p_{m,v_{h,m}}^h \}. 
\end{align*}
Note that $p^h_{m,o} \neq p^{h'}_{m',o'}$ if $(h,m,o)\neq (h',m',o')$ and moreover $1, \cdots , u$ appear in $P$ exactly once, that is, $\{p^h_{m,o} \mid h,m,o \} = \{1, \cdots ,u\}$. 
Then, \\
(1) For any $h$ $(1 \leq h \leq s)$ and $m$ $(1 \leq m \leq t_h)$, $$ k_{p^h_{m,1}} =  \cdots = k_{p^h_{m,v_{h,m}}}.$$    
(2) For any $h$ $(1 \leq h \leq s)$ and $m$ $(1 \leq m \leq t_h)$, 
$$k_{p^h_{m,1}} = k_{p^h_{1,1}} + m - 1 \mod{n}.$$
Therefore $k_{p^h_{m,o}} = k_{p^h_{1,1}} +m -1$ for any $h$ $(1 \leq h \leq s)$, $m$ ($1 \leq m \leq t_h$) and $o$ ($1 \leq o \leq v_{h,m}$).\\
(3) For any $h, h'$ ($1 \leq h, h' \leq s$ and $h \neq h'$),  
the distance of $k_{p^{h}_{1,1}}$ and $k_{p^{h'}_{1,1}} +t_{h'} -1$ in $\mod{n}$
is greater or equal to 2. That is,
$$ k_{p^{h'}_{1,1}} +t_{h'} -1 \notin \{k_{p^{h}_{1,1}}-1, \ k_{p^{h}_{1,1}}, \ k_{p^{h}_{1,1}}+1 \} \mod{n}.$$ 
Moreover, the distance of $k_{p^{h'}_{1,1}}$ and $k_{p^{h}_{1,1}} +t_{h} -1$ in $\mod{n}$
is greater or equal to 2. 
\end{remark} 


The following lemma shows that the cardinality of the set $\{ (\sigma, \pi) \in (S_{2n})^2 \mid \bm{\varepsilon_{k,l}}^{\sigma, \pi} = \bm{\varepsilon} \} $ depends only on the types of the sequences $(k_1, \cdots , k_{u} )$ and $(l_1, \cdots , l_{u} )$ on $\{1, \cdots ,n\}$. 
This justifies the notation $ \sharp \{ (\sigma, \pi) \in (S_{2n})^2 \mid \varepsilon_{P,Q}^{\sigma, \pi } = \bm{\varepsilon} \}$ where $P$ and $Q$ are the types of $\bm{k}$ and $\bm{l}$, respectively. 
\begin{lemma}\label{type1} 
Let $\bm{k}=(k_1, \cdots , k_{u})$ and $\bm{k'}=(k'_1, \cdots , k'_{u})$ be both of type $P=(p_1, \cdots , p_s)$. 
Then for any $\bm{l}=(l_1, \cdots , l_{u}) \in \{1, \cdots, n\}^u$, $(\eta_1, \cdots , \eta_u) \in \{\pm 1\}^u$, and $\bm{\varepsilon}=(\varepsilon_1, \cdots, \varepsilon_{u})$, it holds that 
$$\sharp \{ \sigma \in S_{2n} \mid \bigwedge_{i=1}^{u} A_{k_i,l_i}^{\eta_i} \}  = \sharp \{ \sigma \in S_{2n} \mid \bigwedge_{i=1}^{u} A_{k'_i,l_i}^{\eta_i} \} $$
and
$$ \sharp \{ (\sigma, \pi) \in (S_{2n})^2 \mid \bm{\varepsilon_{k,l}}^{\sigma, \pi} = \bm{\varepsilon} \} =  \sharp \{ (\sigma, \pi) \in (S_{2n})^2 \mid \bm{\varepsilon_{k',l}}^{\sigma, \pi} = \bm{\varepsilon} \}. $$
The equations also hold if $\bm{k}$ and $\bm{l}$ switch roles. 
\end{lemma}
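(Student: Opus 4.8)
The plan is to show that the count $\sharp \{ \sigma \in S_{2n} \mid \bigwedge_{i=1}^{u} A_{k_i,l_i}^{\eta_i} \}$ depends on $\bm{k}$ only through its type $P$. Recall that the condition $A_{k_i,l_i}$ constrains the relative order of the three values $x_{k_i}=\sigma(k_i)$, $x_{k_i+1}=\sigma(k_i+1)$, and $x'_{l_i}=\sigma(n+l_i)$, while $A_{k_i,l_i}^{-1}$ imposes the reverse order. Conjunctively imposing $\bigwedge_i A_{k_i,l_i}^{\eta_i}$ therefore amounts to prescribing a partial order on the multiset of ``slots'' $\{k_i, k_i+1 : i\} \cup \{n+l_i : i\}$, and the number of $\sigma \in S_{2n}$ realizing a given partial order on a fixed set of distinct slots is a function of that partial order alone. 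The key observation, which is exactly what Remark~\ref{structure} was set up to provide, is that the type $P$ records precisely which of the indices $k_i, k_i+1$ coincide: two slots $k_i + \delta$ and $k_j + \delta'$ (with $\delta,\delta'\in\{0,1\}$) are equal iff the positions of $i$ and $j$ in $P$ force $k_i+\delta \equiv k_j+\delta' \bmod n$, and by parts (1)--(3) of Remark~\ref{structure} this coincidence pattern is determined by $P$, not by the actual values $k_i$. Consequently, if $\bm{k}$ and $\bm{k'}$ have the same type $P$, the systems $\bigwedge_i A_{k_i,l_i}^{\eta_i}$ and $\bigwedge_i A_{k'_i,l_i}^{\eta_i}$ are order-isomorphic constraint systems on abstractly identical slot sets inside $S_{2n}$, so they have the same number of solutions.

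First I would make the slot set precise. Given $\bm{k}$ of type $P$ and a sequence $\bm{l}$, let $S_{\bm{k},\bm{l}} \subseteq \{1,\dots,2n\}$ be the set of domain-indices appearing among $\{k_i, k_i+1 \bmod n\}_{i=1}^u$ (these lie in $\{1,\dots,n\}$) together with $\{n+l_i\}_{i=1}^u$ (these lie in $\{n+1,\dots,2n\}$). Each condition $A_{k_i,l_i}^{\eta_i}$ is a statement purely about the relative order of $\sigma$ on three elements of $S_{\bm{k},\bm{l}}$. I would then argue that the bijection $\phi\colon S_{\bm{k},\bm{l}} \to S_{\bm{k'},\bm{l}}$ sending $k_i+\delta \mapsto k'_i+\delta$ and $n+l_i \mapsto n+l_i$ is well-defined and injective: well-definedness is the statement that $k_i+\delta = k_j+\delta'$ implies $k'_i+\delta = k'_j+\delta'$, and injectivity the converse, both of which follow from Remark~\ref{structure} because equality of slots is governed entirely by the combinatorics of $P$ (parts (1),(2) handle coincidences within one $p_h$, part (3) rules out accidental coincidences across different $p_h$, using $u<n$). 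Since $\phi$ is a partial bijection of $\{1,\dots,2n\}$ that carries the constraint system for $\bm{k}$ onto that for $\bm{k'}$ verbatim, extending $\phi$ arbitrarily to a permutation of $\{1,\dots,2n\}$ and pulling back $\sigma$ gives a bijection between the two solution sets; hence the two cardinalities agree. This proves the first displayed equality.

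The second displayed equality follows by combining the first with Lemma~\ref{type}. By that lemma,
\[
\sharp \{ (\sigma, \pi) \in (S_{2n})^2 \mid \bm{\varepsilon_{k,l}}^{\sigma, \pi} = \bm{\varepsilon} \}
= \sum_{\bm{\eta} \in \{ \pm1 \}^u}
\sharp \{ \sigma \in S_{2n} \mid \textstyle\bigwedge_{i=1}^{u} A_{k_i,l_i}^{\eta_i} \}\,
\sharp \{ \pi \in S_{2n} \mid \textstyle\bigwedge_{i=1}^{u} B_{k_i,l_i}^{\eta_i \varepsilon_i} \},
\]
and similarly for $\bm{k'}$. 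The second factor in each summand does not involve $\bm{k}$ at all, and by the first part of the lemma the first factor is unchanged when $\bm{k}$ is replaced by $\bm{k'}$; so the two sums agree term by term. The statement with the roles of $\bm{k}$ and $\bm{l}$ exchanged is proved by the identical argument applied to the $B$-conditions, using the parenthetical remark after Lemma~\ref{condition} that $A$ and $B$ may be interchanged. The main obstacle, and the only place requiring care, is the verification that $\phi$ is a genuine bijection of slot sets --- i.e.\ that no two of the $3u$ formal slots $k_i, k_i+1, n+l_i$ collide except in the way dictated by the type. This is where the hypothesis $u<n$ is essential and where Remark~\ref{structure}(3) does the work: it guarantees that distinct ``blocks'' $p_h$ of the type occupy index-intervals of $\{1,\dots,n\}$ (read cyclically) separated by a gap of at least one, so that the successor index $k_i+1$ of the last element of one block never lands on the first element of another.
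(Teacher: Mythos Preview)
Your overall strategy matches the paper's: show that the coincidence pattern among the domain indices $k_i,\,k_i+1$ is determined entirely by the type $P$, so that a relabeling bijection on $\{1,\dots,2n\}$ carries solutions for $\bm{k}$ to solutions for $\bm{k'}$. Your explicit construction of $\phi$ and its extension to a permutation of $\{1,\dots,2n\}$ is in fact more detailed than the paper's ``one can check that'' step, and the appeal to Remark~\ref{structure}(1)--(3) to justify well-definedness and injectivity of $\phi$ is exactly right.

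There is, however, a genuine error in your derivation of the second displayed equality. You assert that in the decomposition from Lemma~\ref{type}, ``the second factor in each summand does not involve $\bm{k}$ at all''. This is false: the condition $B_{k_i,l_i}$ reads $y'_{l_i+1} < y_{k_i+1} < y'_{l_i}$, which constrains $\pi$ at the position $k_i+1$, so the factor $\sharp\{\pi \in S_{2n} \mid \bigwedge_{i=1}^{u} B_{k_i,l_i}^{\eta_i\varepsilon_i}\}$ does depend on $\bm{k}$. The repair is easy but must be stated: the same type-invariance argument applies to the $B$-factor, because the coincidence pattern among the slots $\{k_i+1\}_{i=1}^{u}$ is again governed entirely by $P$ (indeed $k_i+1=k_j+1$ iff $k_i=k_j$ iff $i,j$ lie in the same subset of $P$), so your map $\phi$ restricted to these slots, extended to a permutation and used to pull back $\pi$, gives the needed bijection. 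This is precisely what the paper's proof records with the line ``Similarly, we have that $\sharp\{\pi \mid \bigwedge B_{k_i,l_i}^{\eta_i\varepsilon_i}\}=\sharp\{\pi \mid \bigwedge B_{k'_i,l_i}^{\eta_i\varepsilon_i}\}$''. Once you insert that step, both factors are individually unchanged under $\bm{k}\to\bm{k'}$ and your termwise comparison goes through.
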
  

\begin{proof}
\begin{align*}
\bigwedge_{i=1}^{u} A_{k_i,l_i} \Longleftrightarrow \bigwedge_{h=1}^{s} ( \bigwedge_{\alpha \in p_h}  A_\alpha) 
& \Longleftrightarrow \bigwedge_{h=1}^{s} ( \bigwedge_{\alpha \in p_h} x_{k_\alpha} < x'_{l_\alpha} <  x_{k_{\alpha}+1} )  
\end{align*}
where $\alpha \in p_h$ runs through all elements appearing in the sequence $p_h$, see Remark~\ref{structure}. 
Therefore, if $(k_1, \cdots , k_{u})$ and $(k'_1, \cdots , k'_{u})$ are of the same type $P$, one can check that 
$$\sharp \{ \sigma \in S_{2n} \mid \bigwedge_{h=1}^{s} ( \bigwedge_{\alpha \in p_h} x_{k_\alpha} < x'_{l_\alpha} <  x_{k_{\alpha}+1} ) \} = \sharp \{ \sigma \in S_{2n} \mid \bigwedge_{h=1}^{s} ( \bigwedge_{\alpha \in p_{h}} x_{k'_\alpha} < x'_{l_\alpha} <  x_{k'_{\alpha}+1} ) \}. $$
Consequently, $\sharp \{ \sigma \in S_{2n} \mid \bigwedge_{i=1}^{u} A_{k_i,l_i} \} $ depends only on the type. The result for allowing negative exponents is obtained similarly, and the first equation is shown.\\
Similarly, we have that $$\sharp \{ \pi \in S_{2n} \mid \bigwedge_{i=1}^{u} B_{k_i,l_i}^{\eta_i \varepsilon_i} \} =\sharp \{ \pi \in S_{2n} \mid \bigwedge_{i=1}^{u} B_{k'_i,l_i}^{\eta_i \varepsilon_i} \}.$$
Hence the second equation  follows from Lemma~\ref{type}.
\end{proof}

\begin{example}
Let $P=(p_1,p_2)=\{ (\{1\}), (\{2,3\}, \{4\}) \}$ and $(k_1, k_2, k_3, k_4)$ be a sequence of type $P$.
Then, 
\begin{align}
\bigwedge_{i=1}^{4} A_{k_{i},l_{i}} 
& \Longleftrightarrow \bigwedge_{h=1}^{2}  (\bigwedge_{\alpha \in p_h} x_{k_\alpha} < x'_{l_\alpha} <  x_{k_{\alpha}+1} ) \nonumber \\
&  \Longleftrightarrow \bigwedge_{\alpha \in \{1\}} (x_{k_\alpha} < x'_{l_\alpha} <  x_{k_{\alpha}+1} ) \wedge \bigwedge_{\alpha \in \{2,3,4\}} (x_{k_\alpha} < x'_{l_\alpha} <  x_{k_{\alpha}+1} ) \nonumber \\
& \Longleftrightarrow (x_{k_1} < x'_{l_1} <  x_{k_{1}+1}) \wedge (x_{k_2} < x'_{l_2}, \ x'_{l_3} <  x_{k_{2}+1} < x'_{l_4}  <  x_{k_{2}+2}), \label{conditionequ}
\end{align}
where ${k_1}, {k_{1}+1}, {k_2}, {k_{2}+1}, {k_{2}+2}$ are different from one another because of the form of type~$P$.
Therefore, $x_{k_1}, x_{k_{1}+1}, x_{k_2}, x_{k_{2}+1}, x_{k_{2}+2}$ are also different from one another.
For example, consider $(k_1, k_2, k_3, k_4)=(1,3,3,4)$ on $\{1,\cdots ,5\}$, which is of type~$P$.
Then, condition (\ref{conditionequ}) becomes 
\begin{align*}
(x_{1} < x'_{l_1} <  x_{2}) \wedge (x_{3} < x'_{l_2}, \ x'_{l_3} <  x_{4} < x'_{l_4}  <  x_{5}).
\end{align*}
On the other hand, for $(k'_1, k'_2, k'_3, k'_4)=(5,2,2,3)$ on $\{1,\cdots ,5\}$, which is also of type~$P$, condition (\ref{conditionequ}) becomes 
\begin{align*}
(x_{5} < x'_{l_1} <  x_{1}) \wedge (x_{2} < x'_{l_2}, \ x'_{l_3} <  x_{3} < x'_{l_4}  <  x_{4}).
\end{align*}
Therefore, we have  $\sharp \{ \sigma \in S_{10} \mid \bigwedge_{i=1}^{4} A_{k_{i},l_{i}} \} = \sharp \{ \sigma \in S_{10} \mid \bigwedge_{i=1}^{4} A_{k'_{i},l_{i}} \}$ .
\end{example}

Next, we compute the number of sequences of a given type. For this, we first introduce the following notation.

\begin{definition}
Given a type $P \in T(u,n)$, we denote by $S_{n,P}$ the set of all length $u$ sequences on $\{1, \cdots ,n\}$ of type $P$.
\end{definition}

\begin{example}
Let $P=\{ (\{1\}), (\{2,3\}, \{4\}) \}$. Then $S_{5,P}=\{ (1,3,3,4),$ $(2,4,4,5),$ $(3,5,5,1), (4,1,1,2), (5,2,2,3) \}$ and $(1,4,4,5) \in S_{6,P}$.
\end{example}

\begin{lemma}\label{numberoftype} 
Let $P=\{p_1, \cdots , p_s\} \in T(u,n)$.
Then the cardinality of ${S_{n,P}}$ is $$ \frac{n (n-1- \sum_{h=1}^{s} l(p_h))!}{(n-s- \sum_{h=1}^{s} l(p_h))!},$$
where $l(p_h)$ is the length of the sequence $p_h$ ($1 \leq h \leq s$). 
 \end{lemma}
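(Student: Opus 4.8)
The plan is to count sequences of type $P$ by choosing, independently across the disjoint "blocks" $p_1, \dots, p_s$, the starting value $k_{p^h_{1,1}}$ of each block, and then arguing that the type constraints force every other entry. By Remark~\ref{structure}, a sequence $(k_1,\dots,k_u)$ of type $P$ is completely determined once we know, for each $h$, the value $a_h := k_{p^h_{1,1}} \in \{1,\dots,n\}$: indeed part~(2) of that remark gives $k_{p^h_{m,o}} = a_h + m - 1 \pmod n$ for all $m,o$, so each block $p_h$ occupies the $t_h = l(p_h)$ consecutive residues $a_h, a_h+1, \dots, a_h + l(p_h) - 1$ modulo $n$ (I will write $l(p_h)$ for $t_h$ to match the statement). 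Conversely, a choice of starting values $(a_1,\dots,a_s)$ yields a sequence of type $P$ precisely when the resulting blocks are "well-separated": by part~(1) of the type definition and part~(3) of Remark~\ref{structure}, the $s$ arcs of consecutive residues $I_h := \{a_h, \dots, a_h + l(p_h)-1\}$ must be pairwise disjoint \emph{and} no two of them may be adjacent modulo $n$ (otherwise rule~(2) or rule~(3) would merge or reorder blocks). So the count of $S_{n,P}$ equals the number of ways to place $s$ labelled arcs of prescribed lengths $l(p_1),\dots,l(p_s)$ on a cycle of $n$ points so that the arcs are pairwise non-touching, i.e. separated by at least one gap point on each side.

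The key computation is then a standard cyclic gap-counting argument. First I would fix which point is $a_1$ (the start of block $1$): there are $n$ choices, and this breaks the rotational symmetry of the cycle. Having placed block $1$, I would cut the cycle open to get a line, and count the number of ways to arrange the remaining $s-1$ labelled arcs together with the mandatory single-point separators. Concretely: the arcs consume $\sum_{h=1}^s l(p_h)$ points in total; between consecutive arcs around the cycle there must be at least one separating point, which accounts for another $s$ points (one gap after each of the $s$ arcs); the remaining $n - s - \sum_{h=1}^s l(p_h)$ points are distributed freely into the $s$ gaps. After fixing block $1$'s position, ordering the other $s-1$ blocks and interleaving the free points is a composition-type count that evaluates to $\dfrac{(n - 1 - \sum_{h} l(p_h))!}{(n - s - \sum_{h} l(p_h))!}$ — this is the falling factorial $(n-1-\sum l(p_h))_{\,s-1}$, which is exactly the number of ways to choose ordered positions for the starts of blocks $2,\dots,s$ among the $n - 1 - \sum_h l(p_h)$ admissible "slots" that remain once block $1$ and all forced separators are accounted for. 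Multiplying by the $n$ choices for $a_1$ gives the claimed formula.

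The main obstacle I anticipate is making the "admissible slots" bijection precise and rigorous: one must carefully set up a bijection between type-$P$ sequences and (start of block $1$) $\times$ (an ordered placement of the remaining block-starts), checking both that every such placement does give a valid type-$P$ sequence (the non-adjacency conditions of Remark~\ref{structure}(3) are genuinely satisfied, including the wrap-around adjacency between the last block and block $1$) and that distinct placements give distinct sequences. A clean way to handle this is the classical transfer to "gaps": delete one separator point immediately following each block to pass from a cycle of $n$ points to a cycle of $n-s$ points carrying $s$ arcs that may now merely be disjoint (no separation required), equivalently $s$ arcs of total length $\sum_h l(p_h)$ plus $n - s - \sum_h l(p_h)$ singleton points on a cycle of size $n-s$; fixing block $1$'s start point ($n$ ways, using the original labelling to recover rotation) and then counting linear arrangements of the remaining $s - 1$ labelled arcs among the other symbols gives the falling factorial above. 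I would present this reduction explicitly for small $s$ (say $s=1$, where the count is simply $n$, and $s=2$, where it is $n(n-1-l(p_1)-l(p_2))$) to confirm the formula, then give the general bijection. Everything else — that the value of each $k_i$ is forced, and that the separated-arcs condition is equivalent to being type $P$ — is immediate from the type definition together with Remark~\ref{structure}.
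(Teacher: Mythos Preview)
Your proposal is correct and follows essentially the same approach as the paper: fix the start of block~$1$ ($n$ choices), then count ordered placements of the remaining $s-1$ block-starts among the $n-1-\sum_h l(p_h)$ admissible slots to get the falling factorial $\binom{n-1-\sum_h l(p_h)}{s-1}(s-1)!$. The paper's proof is much terser and simply asserts this count, whereas you supply the gap/arc bijection that justifies it; your version is more careful but not a different argument.
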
 

\begin{proof}
Let $(k_1, \cdots , k_u) \in S_{n,P}$ and $\alpha_h \in P^{h}_{1}$, where $P^{h}_{1}$ is the first element of the sequence $p_h$ (see Remark~\ref{structure} for notation).
Suppose that $k_{\alpha_1} =1$. 
Then there are $\binom{n-1- \sum_{h=1}^{s} l(p_h)}{s-1}(s-1)! $ possibilities to choose $k_{\alpha_h}$ ($h=2, \cdots s$).
Therefore, 
$$ \sharp S_{n,P}= n  \binom{n-1- \sum_{h=1}^{s} l(p_h)}{s-1}(s-1)! =  \frac{n(n-1- \sum_{h=1}^{s} l(p_h))!}{(n-s- \sum_{h=1}^{s} l(p_h))!}. $$
\end{proof}

The following lemma states that the number of grid links that differ by changing the sign of a crossing that corresponds to a length~1 sequence of a type equal.
\begin{lemma}\label{type2} 
Let $\bm{\varepsilon}=(\varepsilon_1, \cdots, \varepsilon_u)$ be a sequence on $\{ \pm1 \}$ and $\bm{l}=(l_1, \cdots , l_{u})$ be a sequence on $\{1, \cdots , n \}$.
For any $\bm{k}=(k_1, \cdots , k_{u})$ whose type contains a length~1 sequence $(\{ i \})$, we have 
\begin{align*}
\sharp \{ (\sigma, \pi) \in (S_{2n})^2 \mid \bm{\varepsilon_{k,l}}^{\sigma, \pi} = \bm{\varepsilon} \} = \sharp \{ (\sigma, \pi) \in (S_{2n})^2 \mid \bm{\varepsilon_{k,l}}^{\sigma, \pi} = \bm{\varepsilon'} \}, 
\end{align*}
where $\bm{\varepsilon'}= (\varepsilon_1, \cdots, \varepsilon_{i-1}, -\varepsilon_i, \varepsilon_{i+1}, \cdots, \varepsilon_u)$.
The equation also holds if $\bm{k}$ and $\bm{l}$ switch roles. 
\end{lemma}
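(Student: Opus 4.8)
The plan is to produce an explicit sign-reversing involution. First I would unpack the hypothesis: saying that the type of $\bm{k}$ contains the length-$1$ sequence $(\{i\})$ means, by the definition of type together with Remark~\ref{structure}, that $k_j\not\equiv k_i,\,k_i+1,\,k_i-1 \pmod{n}$ for every $j\neq i$. Hence, viewing $\sigma=(x_1,\dots,x_n,x'_1,\dots,x'_n)\in S_{2n}$, the two positions $k_i$ and $k_i+1$ (indices mod $n$; these are distinct since $u<n$ forces $n\ge 2$) occur in none of the conditions $A_{k_j,l_j}$ with $j\neq i$, and are never $x'$-positions. This ``isolation'' is the only structural input I would use.

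Next I would define $\Phi\colon (S_{2n})^2\to (S_{2n})^2$ by letting $\Phi(\sigma,\pi)$ interchange the two entries of $\sigma$ lying in positions $k_i$ and $k_i+1$ while leaving $\pi$ fixed. This is manifestly an involution of $(S_{2n})^2$. The claim to establish is that $\Phi$ maps $\{(\sigma,\pi)\mid \bm{\varepsilon_{k,l}}^{\sigma,\pi}=\bm\varepsilon\}$ into $\{(\sigma,\pi)\mid \bm{\varepsilon_{k,l}}^{\sigma,\pi}=\bm\varepsilon'\}$; applying the same statement with $\bm\varepsilon'$ in place of $\bm\varepsilon$ (noting $(\bm\varepsilon')'=\bm\varepsilon$) and using $\Phi\circ\Phi=\mathrm{id}$ then shows that $\Phi$ restricts to a bijection between the two sets, giving the asserted equality of cardinalities.

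To verify the claim I would fix $(\sigma,\pi)$ with $\bm{\varepsilon_{k,l}}^{\sigma,\pi}=\bm\varepsilon$ and set $(\sigma',\pi)=\Phi(\sigma,\pi)$. For $j\neq i$, the crossing sign $\varepsilon^{\sigma',\pi}_{k_j,l_j}$ depends only on the entries of $\sigma'$ in positions $k_j,k_j+1,n+l_j$ and on $\pi$; by the isolation property these positions avoid $\{k_i,k_i+1\}$, so those entries agree with the corresponding entries of $\sigma$, whence $\varepsilon^{\sigma',\pi}_{k_j,l_j}=\varepsilon^{\sigma,\pi}_{k_j,l_j}=\varepsilon_j$. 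For $j=i$, the value $x'_{l_i}=\sigma'(n+l_i)=\sigma(n+l_i)$ is unchanged while $x_{k_i}$ and $x_{k_i+1}$ are interchanged, so the condition $A_{k_i,l_i}$ for $\sigma'$ is precisely $A^{-1}_{k_i,l_i}$ for $\sigma$ and conversely, whereas $B_{k_i,l_i}$ and $B^{-1}_{k_i,l_i}$ (depending only on $\pi$) are unchanged. By the case table of Remark~\ref{remark_def_epsilon}, interchanging $A_{k_i,l_i}\leftrightarrow A^{-1}_{k_i,l_i}$ with the $B$-behavior held fixed reverses the sign of a nonzero crossing; since $\varepsilon^{\sigma,\pi}_{k_i,l_i}=\varepsilon_i\in\{\pm1\}$ is nonzero, we get $\varepsilon^{\sigma',\pi}_{k_i,l_i}=-\varepsilon_i=\varepsilon_i'$. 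Therefore $\bm{\varepsilon_{k,l}}^{\sigma',\pi}=\bm\varepsilon'$, as claimed.

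For the statement with $\bm k$ and $\bm l$ interchanged I would instead use the involution swapping the two entries of $\pi$ in the $x'$-type positions $n+l_i$ and $n+l_i+1$ (indices mod $n$), which by the identical reasoning interchanges $B_{k_i,l_i}\leftrightarrow B^{-1}_{k_i,l_i}$ while leaving all other crossing data intact. (Alternatively, one can route the argument through Lemma~\ref{type}: the factor $\sharp\{\sigma\mid\bigwedge_j A^{\eta_j}_{k_j,l_j}\}$ is invariant under flipping $\eta_i$ when $i$ is isolated — in each fiber over the entries of $\sigma$ outside positions $k_i,k_i+1$, the two remaining values either straddle $x'_{l_i}$, contributing one configuration to each of $A_{k_i,l_i}$ and $A^{-1}_{k_i,l_i}$, or not, contributing none — and then reindexing $\eta_i\mapsto-\eta_i$ absorbs the flip $\varepsilon_i\mapsto-\varepsilon_i$.) The only delicate point is the bookkeeping that isolation genuinely decouples the pair of positions $k_i,k_i+1$ (resp.\ $n+l_i,n+l_i+1$) from every other crossing condition; once that is checked, the sign reversal is immediate from Remark~\ref{remark_def_epsilon}, so I do not expect a serious obstacle.
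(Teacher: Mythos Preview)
Your proposal is correct and follows essentially the same approach as the paper: both arguments use the involution on $(S_{2n})^2$ that swaps the entries of $\sigma$ in positions $k_i$ and $k_i+1$ (leaving $\pi$ fixed), observe via the isolation of $(\{i\})$ that this leaves all $A_{k_j,l_j}^{\pm1}$, $B_{k_j,l_j}^{\pm1}$ with $j\neq i$ untouched, and note that it interchanges $A_{k_i,l_i}\leftrightarrow A_{k_i,l_i}^{-1}$, thereby flipping the $i$th crossing sign by Remark~\ref{remark_def_epsilon}. Your write-up is somewhat more explicit about the bookkeeping (unpacking isolation as $k_j\not\equiv k_i,k_i\pm1$ and checking that the relevant $\sigma$-positions are disjoint), and your parenthetical alternative via Lemma~\ref{type} is a valid variant, but the core argument is the same.
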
 

\begin{proof}
Suppose that a type $P$ contains a sequence $(\{1\})$ and $\bm{\varepsilon}=(1, 1, \cdots ,1)$, $\bm{\varepsilon'}=(-1, 1, \cdots ,1)$ without loss of generality. 
Then, $x_{k_1}$ and $x_{k_1 +1}$ do not appear in $A_{k_i,l_i}^{\pm 1}$ and $B_{k_i,l_i}^{\pm 1}$ ($i=2, \cdots ,u$) .
Therefore, for any $\sigma$ and $\sigma'$ that are related by exchanging the $k_1$th element with the $(k_1+1)$th (mod $n$) element,  a pair $(\sigma, \pi)$ satisfies the condition
$$\bigwedge_{i =2}^{u}  (A_{k_i,l_i} \wedge B_{k_i,l_i}^{\varepsilon_i}) \vee (A_{k_i,l_i}^{-1} \wedge B_{k_i,l_i}^{- \varepsilon_i}) $$
 if and only if $(\sigma', \pi)$ satisfies the same condition. 
On the other hand, ($\sigma, \pi$) satisfies the condition $A_{k_1,l_1}^{\pm}$ if and only if ($\sigma', \pi$) satisfies the condition $A_{k_1,l_1}^{\mp}$,  for any $\pi$.
Therefore, $(\sigma, \pi)$ satisfies $(A_{k_1,l_1} \wedge B_{k_1,l_1}^{\varepsilon_1}) \vee (A_{k_1,l_1}^{-1} \wedge B_{k_1,l_1}^{- \varepsilon_1})$ if and only if $(\sigma', \pi)$ satisfies $(A_{k_1,l_1}^{-1} \wedge B_{k_1,l_1}^{\varepsilon_1})  \vee (A_{k_1,l_1} \wedge B_{k_1,l_1}^{- \varepsilon_1})$.
Together with Lemma~\ref{condition}, it follows that $(\sigma, \pi)$ satisfies the condition $\bm{\varepsilon_{k,l}}^{\sigma, \pi} = \bm{\varepsilon}$ if and only if $(\sigma', \pi)$ satisfies $\bm{\varepsilon_{k,l}}^{\sigma', \pi} = \bm{\varepsilon'}$.
\end{proof}

The results of this section can now be combined to prove Theorem~\ref{momentthm}. In the proof, we will use the notation introduced in the following two definitions.

\begin{definition}\label{Def_sets_arrangements}
   For given $P=\{p_1,\cdots ,p_s\}$, $Q=\{q_1, \cdots ,q_t\}$ and $\bm{\varepsilon}$, we denote by $X_{P,Q}$ the set of all elements of $\sigma=(x_1,\cdots,x_n,x'_1,\cdots,x'_n)$ that appear in condition $\bm{\varepsilon}_{P,Q}^{\sigma, \pi}=\bm{\varepsilon}$. 
 Analogously, we denote the set of all elements of $\pi=(y_1,\cdots,y_n,y'_1,\cdots,y'_n)$ that appear in the condition $\bm{\varepsilon}_{P,Q}^{\sigma, \pi}=\bm{\varepsilon}$ by $Y_{P,Q}$.
\end{definition}

Note that $X_{P,Q}$ and $Y_{P,Q}$ are independent of the choice of $\bm{\varepsilon}$, and that $u < n$ by assumption. 
The cardinality of $X_{P,Q}$ is therefore $ \sharp X_{P,Q}= \sum_{h=1}^{s}(l(p_h)+1) + \sum_{h=1}^{t} l(q_h)$, and the cardinality of $Y_{P,Q}$ is $\sharp Y_{P,Q}= \sum_{h=1}^{s} l(p_h) + \sum_{h=1}^{t}(l(q_h)+1)$.

\begin{example}\label{ex_X_PQ}
   For $P=\{(\{1,2\})\}$ and $Q=\{(\{1\}, \{2\})\}$, we have 
\begin{align*}
 X_{P,Q}=\{x_{k_1}, x_{k_1 +1}, x'_{l_1}, x'_{l_1 +1} \}, \ Y_{P,Q}=\{y_{k_1+1}, y'_{l_1},y'_{l_1+1}, y'_{l_1+2}\}.
\end{align*}

\end{example}

\begin{definition}\label{Def_cardinality_arrangements}
Let $N_{P,Q,\bm{\varepsilon}}$ be the total number of arrangements of elements of $ X_{P,Q} \cup Y_{P,Q}$ such that the condition $\bm{\varepsilon}_{P,Q}^{\sigma, \pi} = \bm{\varepsilon}$ is satisfied.
\end{definition}

\begin{remark}\label{Rem_product}
The proof of Lemma~\ref{type} provides a framework to express \(N_{P,Q,\bm{\varepsilon}}\) in terms of the number of ways the elements of $X_{P,Q}$ and $Y_{P,Q}$ can be ordered to satisfy the condition $\bigwedge_{i=1}^{u} A_{i}^{\delta_i}$, or $\bigwedge_{i=1}^{u} B_{i}^{\delta_i}$, respectively, for a given $\bm{\delta} =(\delta_1, \dots, \delta_u) \in \{\pm 1\}^u$. Denoting these sets of possible orders as $X_{P,Q,\bm{\delta}}$ and $Y_{P,Q,\bm{\delta}}$, respectively, we obtain:
\[N_{P,Q,\bm{\varepsilon}} = \sum_{\bm{\eta} \in \{ \pm 1 \}^u}  \sharp X_{P,Q,\bm{\eta}} \ \sharp Y_{P,Q,\bm{\eta}\bm{\varepsilon}} .\]
\end{remark}

\begin{proof}[Proof of Theorem~\ref{momentthm}]
By lemma~\ref{odd_moments_vanish}, the odd moments of the linking number vanish. Therefore, only the even $u$th moments with $u<n$ are considered in the following. \\
According to 
Lemma~\ref{lk1}, for $\sigma, \pi \in S_{2n}$, the $u$th moment of the linking number of the link~$L_{n}^{\sigma, \pi}$ whose grid diagram is defined by $\sigma$ and $\pi$ is given as\\
\begin{equation}\label{E1}
\begin{split}
    &\hspace{13 pt} E_{\sigma, \pi}(lk(L_{n}^{\sigma, \pi})^{u})\\&= \sum_{k_1, \cdots , k_{u}=1}^{n} \sum_{l_1, \cdots , l_{u}=1}^{n} \sum_{\bm{\varepsilon} \in \{\pm1\}^{u}} \left( \prod_{i=1}^{u} \varepsilon_i \right)   \frac{1}{(2n!)^2} \sharp \{ (\sigma, \pi) \in (S_{2n})^2 \mid \bm{\varepsilon_{k,l}}^{\sigma, \pi} = \bm{\varepsilon} \}.
\end{split}
\end{equation}
By Lemma~\ref{condition} and Lemma~\ref{type}, the last term can be rewritten as 
\begin{align*}
    \sharp \{ (\sigma, \pi) \in (S_{2n})^2 \mid \bm{\varepsilon_{k,l}}^{\sigma, \pi} = \bm{\varepsilon} \} &\overset{\ref{condition}}{=} \sharp \{ (\sigma, \pi) \in (S_{2n})^2 \mid \displaystyle \bigvee_{\bm{\eta} \in \{ \pm1 \}^u} ( \bigwedge_{i=1}^{u} A_{k_i,l_i}^{\eta_i} \wedge   B_{k_i,l_i}^{\eta_i \varepsilon_i} )\}\\
    &\overset{\ref{type}}{=} \sum_{\bm{\eta} \in \{ \pm1 \}^u}  \sharp \{ \sigma \in S_{2n} \mid \bigwedge_{i=1}^{u} A_{k_i,l_i}^{\eta_i} \} 
 \sharp \{ \pi \in S_{2n} \mid \bigwedge_{i=1}^{u} B_{k_i,l_i}^{\eta_i \varepsilon_i} \}.
\end{align*}
According to Lemma~\ref{type1}, the factors $\sharp \{ \sigma \in S_{2n} \mid \bigwedge_{i=1}^{u} A_{k_i,l_i}^{\eta_i} \} $ and $\sharp \{ \pi \in S_{2n} \mid \bigwedge_{i=1}^{u} B_{k_i,l_i}^{\eta_i \varepsilon_i}\}$ depend only on the types of $\bm{k}$ and $\bm{l}$.\\
Applying Lemma~\ref{type1} and Lemma~\ref{numberoftype} to equation~(\ref{E1}) allows to write the $u$th moment of the linking number as
\begin{align}
& \hspace{17 pt} E_{\sigma, \pi}(lk(L_{n}^{\sigma, \pi})^{u}) \notag\\ 
&\overset{\ref{type1}}{=} \sum_{P\in T(u,n)}  \sum_{Q\in T(u,n)} \sharp S_{n,P} \sharp S_{n,Q} \sum_{\bm{\varepsilon} \in \{\pm1\}^{u}} \left( \prod_{i=1}^{u} \varepsilon_i \right)  \frac{1}{(2n)!^2} \sharp \{ (\sigma, \pi) \in (S_{2n})^2 \mid \bm{\varepsilon}_{P,Q}^{\sigma, \pi} = \bm{\varepsilon} \} \notag\\
&\overset{\ref{numberoftype}}{=}   \sum_{P\in T(u,n)}  \sum_{Q\in T(u,n)}  \frac{ n^2 (n-1- \sum_{h=1}^{s} l(p_h))! \cdot (n-1- \sum_{h=1}^{t} l(q_h))!}{(n-s- \sum_{h=1}^{s} l(p_h))! \cdot (n-t- \sum_{h=1}^{t} l(q_h))!} \notag \\
& \qquad \sum_{\bm{\varepsilon} \in \{\pm1\}^{u}} \left( \prod_{i=1}^{u} \varepsilon_i \right)  \frac{1}{(2n)!^2} \sharp \{ (\sigma, \pi) \in (S_{2n})^2 \mid \bm{\varepsilon}_{P,Q}^{\sigma, \pi} = \bm{\varepsilon} \} \label{E2}.
\end{align}


Using the notation from Definition~\ref{Def_sets_arrangements} and Definition~\ref{Def_cardinality_arrangements}, the final term of~(\ref{E2}) is expressed as follows, with the first equality implied by Remark~\ref{Rem_product}.

\begin{align*}
 \sharp \{ (\sigma, \pi) \in (S_{2n})^2 \mid \bm{\varepsilon}_{P,Q}^{\sigma, \pi}=\bm{\varepsilon} \} 
&= \binom{2n}{\sharp X_{P,Q}} (2n-\sharp X_{P,Q})! \binom{2n}{\sharp Y_{P,Q}} (2n- \sharp Y_{P,Q})! N_{P,Q,\bm{\varepsilon}} \\
&= \frac{(2n)!^2 N_{P,Q,\bm{\varepsilon}}}{(\sharp X_{P,Q})! (\sharp Y_{P,Q})!}, \\
\end{align*}
which results in the last sum of (\ref{E2}) taking the form
\begin{align}
    &\quad \sum_{\bm{\varepsilon} \in \{\pm1\}^{u}} \left( \prod_{i=1}^{u} \varepsilon_i \right)  \frac{1}{(2n)!^2} \sharp \{ (\sigma, \pi) \in (S_{2n})^2 \mid \bm{\varepsilon}_{P,Q}^{\sigma, \pi} = \bm{\varepsilon}\} \label{sum1}\\
    &= \sum_{\bm{\varepsilon} \in \{ \pm1 \}^{u}} \left( \prod_{i=1}^{u} \varepsilon_i \right)  \frac{N_{P,Q,\bm{\varepsilon}}}{(\sharp X_{P,Q})! (\sharp Y_{P,Q})!} \label{sum2}.
\end{align}


By Lemma \ref{type2}, for a type $P$ that contains a length~1 sequence $(\{ i \})$ as an element, and any $Q$ and $\bm{\varepsilon}=(\varepsilon_1, \cdots, \varepsilon_u)$, it holds that 
$$\left( \prod_{i=1}^{u} \varepsilon_i \right) \sharp \{ (\sigma, \pi) \in (S_{2n})^2 \mid \bm{\varepsilon_{k,l}}^{\sigma, \pi} = \bm{\varepsilon} \} + \left( \prod_{i=1}^{u} \varepsilon'_i \right) \sharp \{ (\sigma, \pi) \in (S_{2n})^2 \mid \bm{\varepsilon_{k,l}}^{\sigma, \pi} = \bm{\varepsilon'} \}=0, $$
where $\bm{\varepsilon'}=(\varepsilon'_1, \cdots, \varepsilon'_u)=(\varepsilon_1, \cdots, \varepsilon_{i-1}, -\varepsilon_i, \varepsilon_{i+1}, \cdots \varepsilon_u)$.
Therefore the sum~(\ref{sum1}) vanishes if $P$ is a type that contains a length ~1 sequence.
With this observation and using the notation of (\ref{sum2}), the computation of the $u$th moment of the linking number continues with
\begin{align}\label{E3}
(\ref{E2}) &= \sum_{\substack{P=(p_1, \cdots ,p_s)  \\ |p_h| \geq 2 \ (\forall h)}}  \sum_{\substack{Q=(q_1, \cdots ,q_t)  \\ |q_h| \geq 2 \ (\forall h) }} n^2 \frac{(n-1- \sum_{h=1}^{s} l(p_h))!}{(n-s- \sum_{h=1}^{s} l(p_h))!}  \frac{(n-1- \sum_{h=1}^{t} l(q_h))!}{(n-t- \sum_{h=1}^{t} l(q_h))!}\\
& \quad \sum_{\bm{\varepsilon} \in \{ \pm1 \}^{u}} \left( \prod_{i=1}^{u} \varepsilon_i \right)  \frac{N_{P,Q,\bm{\varepsilon}}}{(\sharp X_{P,Q})! (\sharp Y_{P,Q})!}, \notag
\end{align}
where $|\cdot|$ denotes the number of elements in a sequence.\\
Since $N_{P,Q,\bm{\varepsilon}}$, $\sharp X_{P,Q}$, and $\sharp Y_{P,Q}$ are independent of the order $n$ of a grid diagram, it follows from (\ref{E3}) that the $u$th moment of the linking number of $L_{n}$ is a polynomial in $n$. Moreover, the degree $d$ of the polynomial is at most $u$, and equality is obtained when $s$ and $t$ take the largest possible values, that is, when $s=t=\frac{u}{2}$. 
\end{proof}

\begin{remark}\label{remark:coefficient}
The last step of the above proof implies the form of the leading coefficient $a_u$ of the polynomial $E_{\sigma, \pi}(lk(L_{n}^{\sigma, \pi})^{u})$. For $u$ odd, the polynomial vanishes. For $u$ even and $s=t=\frac{u}{2}$ it holds that $\sharp X_{P,Q}=\sharp Y_{P,Q}=\frac{u}{2} + \sum_{h=1}^{\frac{u}{2}}( l(p_h)+l(q_h))$, and we denote this number by $n_{P,Q}$. Then the coefficient of $n^{u}$ is given by
\[
a_u =    \sum_{\substack{P=(p_1, \cdots ,p_\frac{u}{2})  \\ |p_h| = 2 \ (\forall h)}}  \sum_{\substack{Q=(q_1, \cdots ,q_\frac{u}{2})  \\ |q_h| = 2 \ (\forall h) }} 
    \sum_{\bm{\varepsilon} \in \{\pm 1\}^{u}} \left( \prod_{i=1}^{u} \varepsilon_i \right) 
    \frac{N_{P,Q,\bm{\varepsilon}}}{(n_{P,Q}!)^2} .
\]
\end{remark}

As an example, we calculate $a_u$ for $u=2$.
\begin{example}
For $u=2$, three types are possible: $\{(\{1,2\})\}$ $\{(\{1\}, \{2\})\}$ and $\{(\{2\}, \{1\})\}$.
We obtain the coefficient by considering the nine possibilities of $P$ and $Q$ to be of either type.
\begin{itemize}
\item[(1)] For $P=Q=\{(\{1,2\})\}$, we have $n_{P,Q}=3$ and 
\begin{align*}
&X_{P,Q,(1,1)}=\{(x_{k_1}< x'_{l_1}< x_{k_1 +1})\}, \\
&X_{P,Q,(-1,1)}=\emptyset, \\
&Y_{P,Q,(1,1)}=\{(y'_{l_1 +1}< y_{k_1 +1}< y'_{l_1}) \}, \\
&Y_{P,Q,(-1,1)}=\emptyset,  
\end{align*}
Therefore
\begin{align*}
\sharp X_{P,Q,(1,1)}=1, \ \sharp X_{P,Q,(-1,1)}=0, \ 
\sharp Y_{P,Q,(1,1)}=1, \ \sharp Y_{P,Q,(-1,1)}=0.
\end{align*} 
Since $\sharp X_{P,Q,(i,j)}= \sharp X_{P,Q,(-i,-j)}$ and $\sharp Y_{P,Q,(i,j)}= \sharp Y_{P,Q,(-i,-j)}$, the above calculation is sufficient to determine $N_{P,Q,(1,1)}$:
\begin{align*}
N_{P,Q,(1,1)}=&\sharp X_{P,Q,(1,1)}\sharp Y_{P,Q,(1,1)}+\sharp X_{P,Q,(-1,1)}\sharp Y_{P,Q,(-1,1)} \\
&+\sharp X_{P,Q,(1,-1)}\sharp Y_{P,Q,(1,-1)}+\sharp X_{P,Q,(-1,-1)}\sharp Y_{P,Q,(-1,-1)} \\
=&2(\sharp X_{P,Q,(1,1)}\sharp Y_{P,Q,(1,1)}+\sharp X_{P,Q,(-1,1)}\sharp Y_{P,Q,(-1,1)}) \\
=&2(1 \cdot 1 + 0  \cdot 0)\\=&2.
\end{align*}
Similarly, 
\begin{align*}
&N_{P,Q,(-1,-1)}=2(1 \cdot 1 + 0  \cdot 0) =2, \\
&N_{P,Q,(-1,1)}=N_{P,Q,(1,-1)}=2(1 \cdot 0 + 0  \cdot 1) =0.
\end{align*}
Therefore we obtain
\begin{align*}
 \sum_{\bm{\varepsilon} \in \{\pm 1\}^{2}} \left( \prod_{i=1}^{2} \varepsilon_i \right)  \frac{N_{P,Q,\bm{\varepsilon}}}{(n_{P,Q}!)^2} = \frac{1}{9}.
\end{align*}
\item[(2)] For $P=\{(\{1,2\})\}$ and $Q=\{(\{1\}, \{2\})\}$, we have $n_{P,Q}=4$ and 

\begin{align*}
& X_{P,Q,(1,1)}=\{(x_{k_1}< x'_{l_1}< x'_{l_1 +1}< x_{k_1 +1}), (x_{k_1}<x'_{l_1 +1}<  x'_{l_1}<x_{k_1 +1}) \}, \\ 
& X_{P,Q,(-1,1)}=\emptyset, \\
& Y_{P,Q,(1,1)}=\emptyset, \\
& Y_{P,Q,(-1,1)}=\{(y'_{l_1}<y'_{l_1+2}<y_{k_1+1}<y'_{l_1+1}),(y'_{l_1+2}<y'_{l_1}<y_{k_1+1}<y'_{l_1+1})\},\\
\end{align*}
Therefore
\begin{align*}
\sharp X_{P,Q,(1,1)}=2, \ \sharp X_{P,Q,(-1,1)}=0, \ 
\sharp Y_{P,Q,(1,1)}=0, \ \sharp Y_{P,Q,(-1,1)}=2.
\end{align*} 
Since $\sharp X_{P,Q,(i,j)}= \sharp X_{P,Q,(-i,-j)}$ and $\sharp Y_{P,Q,(i,j)}= \sharp Y_{P,Q,(-i,-j)}$, the above calculation is sufficient to determine
\begin{align*}
&N_{P,Q,(1,1)}=N_{P,Q,(-1,-1)}=2(2 \cdot 0 + 0  \cdot 2)=0 \\
&N_{P,Q,(-1,1)}=N_{P,Q,(1,-1)}=2(2 \cdot 2 + 0  \cdot 0)=8, \\
\end{align*}

and we obtain
\begin{align*}
 \sum_{\bm{\varepsilon} \in \{\pm 1\}^{2}} \left( \prod_{i=1}^{2} \varepsilon_i \right)  \frac{N_{P,Q,\bm{\varepsilon}}}{(n_{P,Q}!)^2} =  - \frac{1}{36}.
\end{align*}
For $P=\{(\{1\}, \{2\})\}$ and $Q=\{(\{1,2\})\}$, $P=\{(\{1,2\})\}$ and $Q=\{(\{2\}, \{1\})\}$, and $P=\{(\{2\}, \{1\})\}$ and $Q=\{(\{1,2\})\}$, we obtain the same value as for $P=\{(\{1,2\})\}$ and $Q=\{(\{1\}, \{2\})\}$ by a calculation that is very similar to the calculation above.
\item[(3)]For $P=Q=\{(\{1\}, \{2\})\}$, we have $n_{P,Q}=5$ and 
\begin{align*}
X_{P,Q,(1,1)}=&\{(x_{k_1}< x'_{l_1}< x_{k_1+1}<x'_{l_1 +1}< x_{k_1 +2})\}, \\
X_{P,Q,(-1,1)}=&\{(x_{k_1+1}< x'_{l_1}< x_{k_1}<x'_{l_1 +1}< x_{k_1 +2}),(x_{k_1+1}< x'_{l_1}<x'_{l_1 +1}< x_{k_1}< x_{k_1 +2}),\\ &(x_{k_1+1}<x'_{l_1 +1}< x'_{l_1}< x_{k_1}< x_{k_1 +2}),(x_{k_1+1}< x'_{l_1}<x'_{l_1 +1}< x_{k_1 +2}< x_{k_1}), \\
&(x_{k_1+1}<x'_{l_1 +1}< x'_{l_1} < x_{k_1 +2} 
< x_{k_1}),(x_{k_1+1}< x'_{l_1 +1}< x_{k_1 +2}< x'_{l_1}<x_{k_1})\}\\
Y_{P,Q,(1,1)}=&\{(y'_{l_1+2}< y_{k_1+2}< y'_{l_1+1}<y_{k_1 +1}< y'_{l_1})\}, \\
Y_{P,Q,(-1,1)}=&\{(y'_{l_1}< y_{k_1+1}< y'_{l_1+2}<y_{k_1 +2}< y'_{l_1+1}),(y'_{l_1}< y'_{l_1+2}< y_{k_1+1}<y_{k_1 +2}< y'_{l_1+1}), \\
&(y'_{l_1+2}<y'_{l_1}< y_{k_1+1}<y_{k_1 +2}< y'_{l_1+1}),(y'_{l_1}<y'_{l_1+2}<y_{k_1 +2}< y_{k_1+1}< y'_{l_1+1}), \\
&(y'_{l_1+2}<y'_{l_1}<y_{k_1 +2}< y_{k_1+1}< y'_{l_1+1}),
(y'_{l_1+2}<y_{k_1 +2}<y'_{l_1}< y_{k_1+1}< y'_{l_1+1})\}. 
\end{align*}
Therefore 
\begin{align*}
\sharp X_{P,Q,(1,1)}=1, \ \sharp X_{P,Q,(-1,1)}=6, \ 
\sharp Y_{P,Q,(1,1)}=1, \ \sharp Y_{P,Q,(-1,1)}=6.
\end{align*}
Since $\sharp X_{P,Q,(i,j)}= \sharp X_{P,Q,(-i,-j)}$ and $\sharp Y_{P,Q,(i,j)}= \sharp Y_{P,Q,(-i,-j)}$, the above calculation is sufficient to determine
\begin{align*}
&N_{P,Q,(1,1)}=N_{P,Q,(-1,-1)}=2(1 \cdot 1 + 6  \cdot 6) =74, \\
&N_{P,Q,(-1,1)}=N_{P,Q,(1,-1)}=2(1 \cdot 6 + 6  \cdot 1) = 24
\end{align*}
and we obtain
\begin{align*}
 \sum_{\bm{\varepsilon} \in \{\pm 1\}^{2}} \left( \prod_{i=1}^{2} \varepsilon_i \right)  \frac{N_{P,Q,\bm{\varepsilon}}}{(n_{P,Q}!)^2} =   \frac{1}{144}.
\end{align*}
\end{itemize}
For $P=\{(\{1\}, \{2\})\}$ and $Q=\{(\{2\}, \{1\})\}$, $P=\{(\{2\}, \{1\})\}$ and $Q=\{(\{1\}, \{2\})\}$, and $P=Q=\{(\{2\}, \{1\})\}$, we obtain the same value as for $P=Q=\{(\{1\}, \{2\})\}$ by a calculation that is very similar to the calculation above.

Combining the above four cases shows that according to Remark~\ref{remark:coefficient}, for $u=2$ the leading coefficient of the polynomial is
\[
a_2 = 1 \frac{1}{9}  + 4  \left(- \frac{1}{36}\right) + 4 \frac{1}{144}= \frac{1}{36}.
\]

\end{example}

\section{Proof  of corollary \ref{distribution}}

A sequence of distribution functors $F_n$, $n\in \mathbb{N}$ is said to {\it converge weakly} to a limit $F$ if the sequence converges for all continuity points $y$ of $F$, i.e. $\lim_{n \rightarrow \infty} F_n(y)=F(y)$.
A sequence of random variables $X_n$, $n\in \mathbb{N}$ {\it converges weakly} to a limit $X$ if their distribution functions $F_n(x)=P(X_n \leq x)$ converge weakly. 

The following theorem provides a sufficient condition for weak convergence of a sequence of random variables (for example, see \cite{D}).

\begin{theorem}\label{limittheorem}
Suppose $\int x^u dF_n(x)$ has a limit $\mu_u$ for each $u$ and 
$$\limsup_{u \rightarrow \infty} \frac{({\mu_{2u})}^{1/2u}}{2u} < \infty, $$
then $F_n$ converges weakly to the unique distribution with these moments.  
\end{theorem}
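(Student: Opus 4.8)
The plan is to prove this by the classical method of moments combined with Carleman's determinacy criterion, in four steps: tightness, extraction of subsequential weak limits and identification of their moments, uniqueness of the moment problem, and an upgrade from subsequential to full convergence.

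First I would establish tightness of the family $\{F_n\}$. Since $\int x^2\,dF_n(x) \to \mu_2 < \infty$, the second moments are uniformly bounded, say by $M_2$, for all $n$ large enough (the finitely many remaining $F_n$ play no role in tightness). Chebyshev's inequality then gives $1 - F_n(R) + F_n(-R) \le M_2/R^2$ uniformly in $n$, so $\{F_n\}$ is tight. By Helly's selection theorem (equivalently Prokhorov's theorem), every subsequence of $\{F_n\}$ has a further subsequence $F_{n_k}$ converging weakly, and by tightness its limit $G$ is a genuine probability distribution function.

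Next I would identify the moments of $G$. For each fixed $u$ the functions $x \mapsto |x|^u$ are uniformly integrable along $\{F_{n_k}\}$: using $|x|^u \mathbf{1}_{\{|x|>R\}} \le |x|^{u+1}/R$ together with $\sup_n \int |x|^{u+1}\,dF_n(x) < \infty$ (all moments converge, hence are bounded), one gets $\sup_n \int_{\{|x|>R\}} |x|^u\,dF_n(x) \to 0$ as $R \to \infty$. Combining this with weak convergence on the compacta $[-R,R]$ yields $\int x^u\,dG(x) = \lim_k \int x^u\,dF_{n_k}(x) = \mu_u$ for every $u$. Thus every subsequential weak limit is a probability distribution with moment sequence $\{\mu_u\}$. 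Now I would invoke Carleman's theorem for the Hamburger moment problem: the hypothesis $\limsup_{u\to\infty}(\mu_{2u})^{1/2u}/(2u) < \infty$ forces $\mu_{2u}^{-1/2u} \ge c/u$ for some $c>0$ and all large $u$, hence $\sum_u \mu_{2u}^{-1/2u} = \infty$, which is Carleman's condition; therefore the sequence $\{\mu_u\}$ is determinate, so there is a unique probability distribution $F$ with $\int x^u\,dF(x) = \mu_u$ for all $u$, and every subsequential weak limit of $\{F_n\}$ equals $F$. A standard subsequence argument then upgrades this to weak convergence of the whole sequence: if $F_n \not\to F$, there are a continuity point $y$ of $F$ and a subsequence with $F_{n_k}(y)$ bounded away from $F(y)$; by tightness extract a further weakly convergent sub-subsequence whose limit, by the above, has moments $\{\mu_u\}$ and hence equals $F$, contradicting $F_{n_k}(y)\not\to F(y)$.

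I expect the genuinely hard input to be the determinacy step: correctly invoking (or proving) Carleman's criterion, whose proof ultimately rests on the theory of quasi-analytic classes (the Denjoy--Carleman theorem). A secondary point requiring care is the uniform-integrability estimate used to pass moments through the weak limit, since weak convergence by itself does not preserve unbounded moments; the remaining steps (tightness via Chebyshev, Helly/Prokhorov selection, and the subsequence argument) are routine.
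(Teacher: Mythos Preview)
Your proposal is a correct and standard proof of this classical moment-method theorem. However, the paper does not prove this statement at all: it is quoted as a known result from the literature (with a reference to Durrett's textbook \cite{D}) and is used as a black box in the proof of Corollary~\ref{distribution}. So there is no paper proof to compare against; you have supplied precisely the textbook argument (tightness, Helly selection, uniform integrability to pass moments to the limit, Carleman's criterion for determinacy, and the subsequence upgrade) that underlies the cited reference.
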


\begin{proof}[Proof of Corollary \ref{distribution}]
Let $\mu_u$ be the constant term of $n$ of $u$th moment of $\frac{lk(L_{n})}{n}$.  
(i.e $\mu_u = \lim_{n \rightarrow \infty} E ((\frac{lk(L_{n})}{n})^u$)).
Then we have 
\begin{align*}
\mu_{2u} & = \sum_{\substack{P \in T(2u,n) \\ |p_h| = 2 \ (\forall h)}}  \sum_{\substack{Q \in T(2u,n) \\ |q_h| = 2 \ (\forall h) }}   \sum_{\bm{\varepsilon} \in \{\pm 1\}^{2u}} \left( \prod_{i=1}^{2u} \varepsilon_i \right) 
    \frac{N_{P,Q,\bm{\varepsilon}}}{(n_{P,Q}!)^2}  \\
& \leq \sum_{\substack{P \in T(2u,n) \\ |p_h| = 2 \ (\forall h)}}  \sum_{\substack{Q \in T(2u,n) \\ |q_h| = 2 \ (\forall h) }}  2^{2u}   \frac{N_{P,Q,\bm{\varepsilon}}}{(n_{P,Q}!)^2} \\
& \leq  \left( \frac{(2u)!}{2^u u!} 3^{u} \right)^2 2^{2u}  \\
& = \frac{(2u)!^2 3^{2u} }{u!^2}.
\end{align*}
This implies
\begin{align*}
\limsup_{u \rightarrow \infty} \frac{({\mu_{2u})}^{1/2u}}{2u} 
& \leq \limsup_{u \rightarrow \infty} \left( \frac{(2u)!^2 3^{2u} }{u!^2} \right)^{1/2u} \frac{1}{2u} \\
& = \limsup_{u \rightarrow \infty}  \frac{3 (2u  (2u-1) \cdots (u+1))^{1/u}}{2u} \\
& \leq \limsup_{u \rightarrow \infty}  \frac{3((2u)^u)^{1/u}}{2u} \\
& = 3.
\end{align*} 
By Theorem~\ref{limittheorem}, $P\left(\frac{lk(L_{n})}{n} \leq x\right)$ converges weakly to the unique distribution with moments $\mu_u$, that is, the sequence of the random variables $\frac{lk(L_{n})}{n}$ converges weakly.
\end{proof}




\end{document}